\newtheorem{theorem}{Theorem}
\newtheorem{lemma}[theorem]{Lemma}
\newtheorem{corollary}[theorem]{Corollary}
\theoremstyle{definition}
\theoremstyle{remark}
\newcommand{\blankbox}[2]{%
  \parbox{\columnwidth}{\centering
    \setlength{\fboxsep}{0pt}%
    \fbox{\raisebox{0pt}[#2]{\hspace{#1}}}%
  }%
}
 \def\doublespacing{\parskip 5 pt plus 1 pt \baselineskip 25 pt
      \lineskip 13 pt \normallineskip 13 pt}
 \newcommand{\abs}[1]{\lvert#1\rvert}
 \def\R{{\mathbb{R}}}
 \def\Z{{\mathbb{Z}}}
 \def\Q{{\mathbb{Q}}}
 \def\N{{\mathbb{N}}}
 \def\S{{\Sigma}}
 \def\C{{\mathbb{C}}}
 \def\mcg{{\rm MCG}}
\def\mod{{\rm Mod}}
\def\emod{{\rm Mod}^*}
 \def\GL{{\rm GL}}
 \def\SL{{\rm SL}}
 \def\Sp{{\rm Sp}}
 \def\Im{{\rm Im}}
 \def\PSp{{\rm PSp}}
 \def\G{{\mathcal{G}}}
\def\S{{\mathcal{S}}}
\begin{document}

\newenvironment{prooff}{\medskip \par \noindent {\it Proof}\ }{\hfill
$\square$ \medskip \par}
    \def\sqr#1#2{{\vcenter{\hrule height.#2pt
        \hbox{\vrule width.#2pt height#1pt \kern#1pt
            \vrule width.#2pt}\hrule height.#2pt}}}
    \def\square{\mathchoice\sqr67\sqr67\sqr{2.1}6\sqr{1.5}6}
\def\pf#1{\medskip \par \noindent {\it #1.}\ }
\def\endpf{\hfill $\square$ \medskip \par}
\def\demo#1{\medskip \par \noindent {\it #1.}\ }
\def\enddemo{\medskip \par}
\def\qed{~\hfill$\square$}

 \title[Generating The Mapping Class Group By Three Involutions]
 {Generating The Mapping Class Group By Three Involutions}

\author[O\u{g}uz Y{\i}ld{\i}z]{O\u{g}uz Y{\i}ld{\i}z}
\date{\today}

 \address{Department of Mathematics, Middle East Technical University, 06800
  Ankara, Turkey
  }
 \email{oguzyildiz16@gmail.com}
\begin{abstract}
We prove that the mapping class group of a closed connected orientable surface of genus $g$ 
is generated by three involutions for $g\geq 6$. 
\end{abstract} 
 \maketitle

\section{Introduction}
The mapping class group $\mod(\Sigma_g)$ of a closed connected orientable 
surface $\Sigma_g$ of genus $g$ is the group of  
orientation--preserving diffeomorphisms of $\Sigma_g\to \Sigma_g$ up to isotopy. We are interested in
generating $\mod(\Sigma_g)$ by three involutions, elements of order two. 

We deal with the next attractive question suggested by Brendle-Farb and Kassabov. Korkmaz and Margalit gave place to the question in ~\cite{Korkmaz2012} and ~\cite{Margalit}: 
Is $\mod(\Sigma_g)$ generated by three involutions for $g \geq 3$?

Recently, Korkmaz~\cite{Korkmaz3inv} found a generating set consisting of three involutions for the mapping class group $\mod(\Sigma_g)$ for $g \geq 8$.
We proved on Theorem ~\ref{thm:1} that the mapping class group $\mod(\Sigma_g)$ is also generated by 
three involutions if $g \geq 6$.

\begin{theorem} \label{thm:1}
The mapping class group $\mod(\Sigma_g)$ is generated by 
three involutions if $g \geq 6$. 
\end{theorem}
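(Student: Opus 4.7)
My strategy is the classical one for results of this type: I would exhibit three explicit involutions $\iota_1, \iota_2, \iota_3 \in \mod(\Sigma_g)$ and then verify that the subgroup they generate contains a known small generating set of $\mod(\Sigma_g)$, for instance the Humphries $2g+1$ Dehn twists or one of the shorter Dehn-twist lists that Korkmaz has produced in earlier work. This reduces the problem to the purely algebraic task of extracting Dehn twists as words in $\iota_1,\iota_2,\iota_3$.

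The first step is to fix a symmetric embedding of $\Sigma_g$ in $\R^3$ and let $\iota_1$ be the rotation by $\pi$ that permutes the handles. This gives automatic control over many simple closed curves, since $\iota_1$ acts on them by an easily read-off permutation. For $\iota_2$ I would take a second geometric involution, say a rotation about a perpendicular axis, chosen so that $\iota_1\iota_2$ has large finite order and its powers translate a test curve along a chain of nonseparating curves. The third involution $\iota_3$ is the essential new ingredient needed to push the bound from $g\geq 8$ down to $g\geq 6$; I would build it by conjugating one of the geometric involutions by an explicit product of Dehn twists, engineered so that some product such as $(\iota_1\iota_3)^k$ is a single Dehn twist, or a bounded product of Dehn twists, already lying in $\langle \iota_1,\iota_2,\iota_3\rangle$.

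With one Dehn twist in hand, the plan is to conjugate it by powers of the finite-order elements $\iota_1\iota_2$ and $\iota_2\iota_3$ to obtain Dehn twists about an entire chain of curves. If the resulting family contains a nonseparating $2g$-chain together with one extra twist meeting the chain in a single point, Humphries' theorem finishes the argument. A twist about a separating curve of genus $1$ or $2$ is usually still required, and I would extract it from an appropriate chain relation applied inside the subgroup already produced.

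The main obstacle I expect is geometric rather than group-theoretic: for $g=6$ and $g=7$ the available rotational symmetries of $\Sigma_g$ are tighter than for $g\geq 8$, so the very symmetric picture that makes Korkmaz's $g\geq 8$ argument work no longer has enough room. The third involution must therefore be designed very explicitly in small genus, and the bulk of the verification will consist of case-by-case curve tracking confirming that the conjugates of one visible Dehn twist do sweep out an entire Humphries-type generating set. That explicit tracking is where I expect the proof to concentrate its real work.
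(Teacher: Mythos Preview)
Your architectural outline matches the paper closely: the paper takes $\iota_1=\rho_1$ and $\iota_2=\rho_2$ to be two $\pi$--rotations with $\rho_1\rho_2=R$ the $2\pi/g$--rotation, and for $\iota_3$ it uses exactly what you guessed, a conjugate of one of these rotations by a short product of Dehn twists (concretely, $\iota_3=\rho_4 F_1$ with $\rho_4=R^2\rho_2R^{-2}$ and $F_1$ a product of three positive and three negative twists chosen so that $\rho_4$ swaps the two halves; this is an involution by the elementary observation that $\rho x y^{-1}=y\rho y^{-1}$ when $\rho x\rho=y$). So your description of the shape of the argument is right.

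Where the paper diverges from your plan is tactical, and the difference matters for actually closing the argument. You aim to extract a \emph{single} Dehn twist and then conjugate it around by $R$ to fill out a Humphries chain, mentioning separating twists and chain relations as possible extra inputs. The paper never produces a single Dehn twist, never touches a separating curve, and never uses a chain relation. Instead it targets Korkmaz's four--element generating set $\{R,\,A_1A_2^{-1},\,B_1B_2^{-1},\,C_1C_2^{-1}\}$, so it only needs to manufacture \emph{ratios} of twists about nonseparating curves. This is far more natural here: since $\rho_4\in\langle\rho_1,\rho_2\rangle$, the subgroup already contains $F_1$, which is a balanced product $XY^{-1}$ of three positive against three negative twists; conjugating by $R$ and then taking short products like $F_2F_1$, $F_4F_3$, $F_4F_5^{-1}$ cancels most factors and spits out elementary ratios such as $B_2A_2^{-1}$ and $B_6C_6^{-1}$, from which $A_1A_2^{-1}$, $B_1B_2^{-1}$, $C_1C_2^{-1}$ are assembled in a few lines. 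Your proposed route via an individual Dehn twist would be noticeably harder to execute, and the separating--curve/chain--relation step you anticipate is simply absent.
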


We state the next corollary as in~\cite{Korkmaz3inv}. Since there is a surjective 
homomorphism from $\mod(\Sigma_g)$ onto the symplectic group 
$\Sp (2g,\Z)$, 
we have the following immediate result: 

\begin{corollary} 
The symplectic group $\Sp(2g,\Z)$ is generated 
by three involutions if $g\geq 6$.
\end{corollary}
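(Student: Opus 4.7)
The corollary is a formal consequence of Theorem~\ref{thm:1}, and my plan is to push three involutions through the classical symplectic representation of the mapping class group.

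First I would recall the surjective homomorphism $\Psi \colon \mod(\Sigma_g) \twoheadrightarrow \Sp(2g,\Z)$ induced by the action on $H_1(\Sigma_g;\Z) \cong \Z^{2g}$, preserving the algebraic intersection form; surjectivity is classical, since Dehn twists map to symplectic transvections and transvections generate $\Sp(2g,\Z)$. Next I would apply Theorem~\ref{thm:1} to fix three involutions $\iota_1, \iota_2, \iota_3$ with $\langle \iota_1, \iota_2, \iota_3 \rangle = \mod(\Sigma_g)$, and set $A_i := \Psi(\iota_i)$. Each $A_i$ satisfies $A_i^2 = I$, and since $\Psi$ is a surjective homomorphism, the matrices $A_1, A_2, A_3$ generate $\Sp(2g,\Z)$.

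The only wrinkle is that, a priori, some $A_i$ could equal the identity matrix rather than an honest order-two element. I would dispose of this by replacing any such $A_i$ with $-I \in \Sp(2g,\Z)$, which is itself an involution for $g \geq 1$; enlarging a generating set cannot shrink the group it generates, so $\Sp(2g,\Z)$ remains generated by the resulting three involutions. I do not foresee any serious obstacle here---the argument is formal once Theorem~\ref{thm:1} is in hand, which is exactly why the excerpt describes the corollary as an ``immediate result''.
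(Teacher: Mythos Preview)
Your proposal is correct and matches the paper's own reasoning: the paper simply notes the surjection $\mod(\Sigma_g)\twoheadrightarrow\Sp(2g,\Z)$ and declares the corollary an immediate consequence of Theorem~\ref{thm:1}. Your extra care about the possibility that some $\Psi(\iota_i)$ might be trivial (and the $-I$ fix) is a harmless refinement that the paper does not bother to mention.
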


Dehn~\cite{Dehn}
proved that $\mod(\Sigma_g)$ is generated by $2g(g-1)$ Dehn twists. Lickorish~\cite{Lickorish} 
found a generating set of $3g-1$ Dehn twists.
Humphries~\cite{Humphries} proved that $2g+1$ is the minimal number of Dehn twists that can generate $\mod(\Sigma_g)$ and presented a generating set of $2g+1$ Dehn twists.
Lu~\cite{Lu} obtained a generating set consisting of three elements, two of which are of finite order. 

Maclachlan~\cite{Maclachlan} was the first who showed that $\mod(\Sigma_g)$ can be generated by torsions. McCarthy and Papadopoulos~\cite{McCarthyPapadopoulos} proved that
$\mod(\Sigma_g)$ can be generated by only using involutions. Note that for homological reasons $\mod(\Sigma_g)$ is not generated by involutions if $g=1$ or $g=2$. Stukow~\cite{Stukow2}
showed that index five subgroup of $\mod(\Sigma_2)$ is generated by involutions. So, we consider the case of $g \geq 3$.
Luo~\cite{Luo} found an upper bound $12g+2$ for the number of involutions needed to generate $\mod(\Sigma_g)$.
Brendle and Farb~\cite{BrendleFarb} decreased this number to $6$. 
Kassabov~\cite{Kassabov} showed that four involutions are enough for $g\geq 7$. 
Recently, Korkmaz~\cite{Korkmaz3inv} found a generating set of three involutions if $g\geq 8$ and of four involutions if $g\geq 3$.



\section{An overview of mapping class groups}

Throughout the paper we only deal with closed connected orientable surfaces of genus g, $\Sigma_g$, 
as designed in Figure~\ref{fig1} so that the rotation $R$ by $2\pi$$/g$ degrees about $z$-axis is a well-defined
self-diffeomorphism of $\Sigma_g$. The mapping class group $\mod(\Sigma_g)$ of a closed connected orientable 
surface $\Sigma_g$ is the group of orientation--preserving diffeomorphisms of $\Sigma_g\to \Sigma_g$ up to isotopy.
Diffeomorphisms and curves shall be distinguished up to isotopy. For more detailed information on the mapping class groups,
one can be referred to~\cite{FarbMargalit}.
We only use three types of simple closed curves 
$a_i$'s, $b_i$'s and $c_i$'s as shown on Figure~\ref{fig1} where $1 \leq i \leq g$.
In order to align with the notation of Korkmaz in ~\cite{Korkmaz3inv}, 
we also denote simple closed curves by lowercase letters $a_i$, $b_i$, $c_i$
and corresponding right Dehn twists by uppercase letters $A_i$, $B_i$, $C_i$ or by commonly used notation $t_{a_i}$,$t_{b_i}$,$t_{c_i}$, respectively.
All indices shall be thought as modulo $g$.
Note that the composition of diffeomorphisms 
$f_1f_2$ means that $f_2$ is applied first and then $f_1$ is applied second.

For any simple closed curves $c_1$ and $c_2$ on $\Sigma_g$ and diffeomorphism $f:\Sigma_g \to \Sigma_g$, 
we use the following five basic facts along the paper for many times:

First, we have $ft_{c_1}f^{-1}=t_{f(c_1)}$.
Second, $c_1$ is isotopic to $c_2$ if and only if $t_{c_1}=t_{c_2}$ in $\mod(\Sigma_g)$.
Third, if $c_1$ and $c_2$ are disjoint, then $t_{c_1}(c_2)=c_2$.

\medskip 
\textbf{Commutativity:} If $c_1$ and $c_2$ are disjoint, then $t_{c_1}t_{c_2}=t_{c_2}t_{c_1}$ since
\begin{eqnarray*}
t_{c_1}t_{c_2}
&=& t_{c_2}t_{c_2}^{-1}t_{c_1}t_{c_2}\\
&=& t_{c_2}t_{t_{c_2}^{-1}(c_1)}\\
&=& t_{c_2}t_{c_1}.
\end{eqnarray*} 

\medskip
\textbf{Braid Relation:}   
If $c_1$ and $c_2$ are intersecting transversally at one point, then we have
$t_{c_1}t_{c_2}t_{c_1} = t_{c_2}t_{c_1}t_{c_2}$ since
\begin{eqnarray*}
(t_{c_1}t_{c_2})t_{c_1}(t_{c_1}t_{c_2})^{-1}
&=& t_{t_{c_1}t_{c_2}(c_1)}\\
&=& t_{c_2}.
\end{eqnarray*}

Dehn and Lickorish showed that the mapping class group $\mod(\Sigma_g)$ is generated by $3g-1$ Dehn twists about nonseparating
simple closed curves. After their work, Humphries~\cite{Humphries} proved the following theorem.
 
\begin{theorem} {\rm\bf(Dehn-Lickorish-Humphries)}\label{thm:Humphries}
The mapping class group $\mod(\Sigma_g)$ is generated by the set
$\{ A_1,A_2,B_1,B_2,\ldots ,B_g,C_1,C_2,\ldots ,C_{g-1}\}.$
\end{theorem}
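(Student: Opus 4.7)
The plan is to reduce to the Dehn--Lickorish part of the theorem, already stated in the paragraph preceding the statement, which guarantees that $\mod(\Sigma_g)$ is generated by a finite set of Dehn twists along nonseparating simple closed curves — in particular by the Lickorish set $\{A_1,\ldots,A_g,B_1,\ldots,B_g,C_1,\ldots,C_{g-1}\}$. Writing $H$ for the subgroup generated by the Humphries set $\{A_1,A_2,B_1,\ldots,B_g,C_1,\ldots,C_{g-1}\}$, the problem reduces to proving $A_i\in H$ for every $i=3,4,\ldots,g$.

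To establish $A_i\in H$, I would construct, for each such $i$, an explicit element $\varphi_i\in H$ with $\varphi_i(a_2)=a_i$; the basic identity $f t_c f^{-1}=t_{f(c)}$ then delivers $A_i=\varphi_i A_2 \varphi_i^{-1}\in H$. A natural candidate for $\varphi_i$ is a \emph{handle-sliding} word built from the $B_j$'s and $C_j$'s, for instance $\varphi_i=(B_{i-1}C_{i-1})(B_{i-2}C_{i-2})\cdots(B_2C_2)$ or a small braid-twisted variant, engineered to drag $a_2$ successively across the consecutive handles $3,4,\ldots,i$. I plan to verify the effect inductively on $i$: assuming the partial product through stage $j$ sends $a_2$ to a curve positioned in the $j$-th handle in a controlled way, apply the next block $B_jC_j$ and read off the new curve from the picture in Figure~\ref{fig1}.

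The main obstacle is exactly this geometric bookkeeping: one has to verify on the surface that each intermediate curve is positioned so the next Dehn twist produces the intended slide. The two relations emphasised in Section~2 are the decisive tools. Since consecutive curves $b_j$ and $c_j$ (respectively $c_j$ and $b_{j+1}$) meet transversely in a single point, the braid relation controls precisely how $B_j$ and $C_j$ interchange neighbouring curves, while disjointness gives both commutativity and $t_{c_1}(c_2)=c_2$ to move untangled curves past later twists. Once the chain of handle-slides is checked for general $i$, the remainder of the argument is formal, and combined with Dehn--Lickorish this yields $H=\mod(\Sigma_g)$.
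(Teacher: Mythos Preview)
The paper does not give its own proof of this theorem: it is quoted with a citation to Humphries~\cite{Humphries} and then used as a black box for Theorem~\ref{thm:thmKorkmaz}. So there is no in-paper argument to compare against; what follows is an assessment of your outline on its own terms.

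Your overall strategy---reduce to the Lickorish generating set and then exhibit each $A_i$ $(i\ge 3)$ as a conjugate in $H$ of $A_1$ or $A_2$---is exactly the classical route. The gap is in the conjugating element. Your candidate $\varphi_i=(B_{i-1}C_{i-1})\cdots(B_2C_2)$ does not send $a_2$ to $a_i$: already for $i=3$, since $a_2$ is disjoint from $c_2$ one has $B_2C_2(a_2)=B_2(a_2)$, which is $a_2$ twisted once about $b_2$, not $a_3$. More structurally, among the Humphries curves the only one meeting $a_i$ (for $i\ge 3$) is $b_i$; hence the elementary slide $t_xt_y(x)=y$ with $t_x,t_y$ drawn from the Humphries twists can carry $a_2$ to any $b_j$, any $c_j$ $(j\le g-1)$, or to $a_1$, but never to $a_3,\ldots,a_g$---passing from $b_i$ to $a_i$ by that move would require $A_i$ itself. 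So a handle-sliding word in the $B_j$'s and $C_j$'s alone cannot work, and this is not merely geometric bookkeeping but a genuine obstruction to the naive scheme. The actual construction (as in Humphries' paper or the account in~\cite{FarbMargalit}) uses both $A_1$ and $A_2$ in an essential way and is the real content of the proof; once you write down and verify such a word, the rest of your plan goes through.
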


Let $R$ be the rotation by $2\pi/g$ about the $x$--axis represented in Figure~\ref{fig1}. 
Then, \mbox{$R(a_k)=a_{k+1}$,} $R(b_k)=b_{k+1}$ and
$R(c_k)=c_{k+1}$. Korkmaz found a generating set of four elements, one of which is the rotation element $R$
and each of the other elements is a product of two opposite Dehn twists.
Korkmaz~\cite{Korkmaz3inv} proved the following useful theorem as a corollary to Theorem~\ref{thm:Humphries}.
\begin{theorem} \label{thm:thmKorkmaz} 
If $g\geq 3$, then the mapping class group $\mod(\Sigma_g)$ is generated by the four elements
\(
R, A_1A_2^{-1},B_1B_2^{-1}, C_1C_2^{-1}. \)
\end{theorem}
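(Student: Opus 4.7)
The plan is to prove that the subgroup $H=\langle R, \alpha, \beta, \gamma\rangle$, with $\alpha=A_1A_2^{-1}$, $\beta=B_1B_2^{-1}$, $\gamma=C_1C_2^{-1}$, equals $\mod(\Sigma_g)$. By Theorem~\ref{thm:Humphries}, it suffices to place the Humphries generators $A_1, A_2, B_1,\ldots,B_g, C_1,\ldots,C_{g-1}$ inside $H$.

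First I would exploit the rotational symmetry. Since $R(a_k)=a_{k+1}$ and similarly for $b_k$ and $c_k$, conjugation by $R^k$ places $A_{k+1}A_{k+2}^{-1}$, $B_{k+1}B_{k+2}^{-1}$, and $C_{k+1}C_{k+2}^{-1}$ in $H$. Because the curves $a_i$ are pairwise disjoint, the twists $A_i$ commute pairwise, so the telescoping product
\[
(A_1A_2^{-1})(A_2A_3^{-1})\cdots(A_{j-1}A_j^{-1}) = A_1A_j^{-1}
\]
lies in $H$ for every $j$, and likewise for the $B$ and $C$ families. Thus $H$ contains every difference $X_iX_j^{-1}$ with $X\in\{A,B,C\}$ and all $i,j$.

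The main obstacle is escaping these differences to produce a single Dehn twist. My strategy is to form a commutator such as $[\alpha,\beta]$ or $[\beta,\gamma]$ in which most of the constituent twists commute: each $a_i$ is disjoint from $b_j$ for $i\neq j$, and each $b_i$ is disjoint from $c_j$ unless $j\in\{i-1,i\}$. After moving the commuting factors past one another and invoking the braid relations $A_iB_iA_i=B_iA_iB_i$ and $B_iC_iB_i=C_iB_iC_i$, such a commutator should collapse to a short expression involving only twists about a few specific curves and their images, like $A_i(b_i)$ or $B_i(c_i)$. Combining a small number of these expressions so that the auxiliary twists pair up and cancel, I would aim to isolate a single Dehn twist, say $B_1$, as an element of $H$.

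Once a single Dehn twist lies in $H$, its $R$-orbit supplies every twist in that family, and multiplying with the differences from the previous step recovers every generator in the remaining two families. This yields all Humphries generators and completes the proof.
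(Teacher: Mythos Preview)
The paper does not actually prove this theorem; it is quoted from Korkmaz~\cite{Korkmaz3inv} and then used as a black box in the proofs of Lemmas~\ref{lemma:1} and~\ref{lemma:2}. So there is no ``paper's own proof'' to compare against, and your proposal has to be judged on its own.

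Your steps through obtaining all differences $A_iA_j^{-1}$, $B_iB_j^{-1}$, $C_iC_j^{-1}$ are correct. The real problem is your Step~3, where you hope that a commutator such as $[\alpha,\beta]$ will collapse to a single Dehn twist. Because every curve in $\{a_1,b_1\}$ is disjoint from every curve in $\{a_2,b_2\}$, one computes
\[
[\alpha,\beta]=[A_1A_2^{-1},\,B_1B_2^{-1}]=[A_1,B_1]\cdot[A_2^{-1},B_2^{-1}]
=\bigl(t_{A_1(b_1)}B_1^{-1}\bigr)\bigl(t_{A_2^{-1}(b_2)}^{-1}B_2\bigr),
\]
which is again a balanced word with two positive and two negative twists; exactly the same phenomenon occurs for $[\beta,\gamma]$. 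Your outline gives no mechanism by which iterating such balanced words, braid relations, and the differences already in $H$ would ever produce a \emph{single} Dehn twist; the sentence ``I would aim to isolate a single Dehn twist'' is precisely the nontrivial content of the theorem, and it is left as a wish rather than an argument.

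In Korkmaz's actual proof the passage from differences to the Humphries generators is the delicate step and is not done via commutators of $\alpha,\beta,\gamma$. One instead conjugates the given differences by carefully chosen elements of $H$ (products of the generators and their $R$-shifts) that move only some of the six underlying curves, thereby producing \emph{mixed} differences such as $B_iC_j^{-1}$ and $A_iB_j^{-1}$; chaining enough of these together, a Humphries twist can finally be expressed in $H$. Until you supply a concrete computation of that kind, your proposal covers only the easy (rotational/telescoping) part of the argument and leaves the essential step undone.
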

\begin{figure}
\begin{tikzpicture}[scale=0.5]
\begin{scope} [xshift=7cm]
 \draw[very thick, violet] (0,0) circle [radius=5.5cm];
 \draw[very thick, violet] (0,2.9) circle [radius=0.6cm]; 
 \draw[very thick, violet, rotate=60] (0,2.9) circle [radius=0.6cm]; 
 \draw[very thick, violet, rotate=-60] (0,2.9) circle [radius=0.6cm]; 
 \draw[very thick, violet, fill ] (0,-2.9) circle [radius=0.03cm]; 
 \draw[very thick, violet, fill, rotate=15] (0,-2.9) circle [radius=0.03cm]; 
\draw[very thick, violet, fill, rotate=-15] (0,-2.9) circle [radius=0.03cm]; 
\draw[thick, green,  rounded corners=10pt] (-0.05, 3.5) ..controls (-0.6,3.8) and (-0.6,5.2).. (-0.05,5.5) ;
\draw[thick, green, dashed, rounded corners=10pt] (0.05,3.5)..controls (0.6,3.8) and (0.6,5.2).. (0.05,5.5) ;
\node[scale=0.8] at (-1,4.7) {$a_1$};
\draw[thick, green, rotate=60, rounded corners=10pt] 
	(-0.05, 3.5) ..controls (-0.6,3.8) and (-0.6,5.2).. (-0.05,5.5) ;
\draw[thick, green, dashed, rotate=60, rounded corners=10pt] 
	(0.05,3.5)..controls (0.6,3.8) and (0.6,5.2).. (0.05,5.5) ;
\node[scale=0.8] at (-4.2,1.4) {$a_g$};
\draw[thick, green, rotate=-60, rounded corners=10pt] 
	(-0.05, 3.5) ..controls (-0.6,3.8) and (-0.6,5.2).. (-0.05,5.5) ;
\draw[thick, green, dashed, rotate=-60, rounded corners=10pt] 
	(0.05,3.5)..controls (0.6,3.8) and (0.6,5.2).. (0.05,5.5) ;
\node[scale=0.8] at (4.2,1.4) {$a_2$};
 \draw[thick, blue] (0,2.9) circle [radius=0.8cm]; 
 \draw[thick, rotate=60, blue] (0,2.9) circle [radius=0.8cm]; 
 \draw[thick, blue, rotate=-60] (0,2.9) circle [radius=0.8cm]; 
\node[scale=0.8] at (-2.6,2.7) {$b_g$};
\node[scale=0.8] at (2.7,2.6) {$b_2$};
\node[scale=0.8] at (1.0,3.5) {$b_1$};
\draw[thick, red, rounded corners=15pt] (-1.95, 1.65)-- (-1.756,2.633) -- (-0.6,2.68) ;
\draw[thick, red, dashed, rounded corners=15pt] (-1.83, 1.6)-- (-0.86,1.56) -- (-0.5,2.58) ;
\node[scale=0.8] at (-1.5,2.9) {$c_g$};
\draw[thick, red, rounded corners=15pt] (1.95, 1.65)-- (1.756,2.633) -- (0.6,2.68);
\draw[thick, red, dashed, rounded corners=15pt] (1.83, 1.6)-- (0.86,1.56) -- (0.5,2.58) ;
\node[scale=0.8] at (1.9,2.6) {$c_1$};
\draw[thick, red, rotate=60, rounded corners=10pt] (-1.9, 2.5)-- (-1.4,2.833) -- (-0.6,2.68) ;
\draw[thick, red, dashed, rotate=60, rounded corners=8pt] (-1.3, 1.9)-- (-0.8,1.96) -- (-0.5,2.58) ;
\node[scale=0.8] at (-3.7,0.0) {$c_{g-1}$};
\draw[thick, red, rotate=-60, rounded corners=10pt] (1.9, 2.5)-- (1.4,2.833) -- (0.6,2.68);
\draw[thick, red, dashed, rotate=-60, rounded corners=8pt] (1.3, 1.9)-- (0.8,1.96) -- (0.5,2.58) ;
\node[scale=0.8] at (3.5,0) {$c_2$};
\draw[->, rotate=-60] (-0.6,6.9)..controls (-0.8,6.2) and (0.8,6.2).. (0.6,6.9);
\draw[rotate=-60] (0,-7) -- (0,-5.6); \draw[rotate=-60] (0,2) -- (0,3.4); \draw[rotate=-60] (0,5.6) -- (0,6.25); \draw[rotate=-60] (0,6.5) -- (0,7.5);
\draw[dotted, rotate=-60] (0,-5.3) -- (0,1.85); \draw[dotted] (0,3.7) -- (0,5.3); 
\draw[->, rotate=-30] (-0.6,6.9)..controls (-0.8,6.2) and (0.8,6.2).. (0.6,6.9);
\draw[rotate=-30]  (0,-7) -- (0,-5.6);  \draw[rotate=-30] (0,5.6) -- (0,6.25); \draw[rotate=-30] (0,6.5) -- (0,7.5);
\draw[dotted, rotate=-30] (0,-5.3) -- (0,5.3); 
\node at (6.7,2.7) {$\rho_1$};
\node at (2.5,6.7) {$\rho_2$};
\draw[->, rounded corners=20pt, rotate=50] (0, 6.2)..controls (1.395, 6.039) and (2.716, 5.574).. (3.906,4.817);
\node[rotate=40] at (-3.3,5.8) {$R$};
\end{scope}
\node at (0.7+7, 6.7+0.5) {$y$};
\node at (6.7+7+0.5, 0.7) {$x$};
\draw[->, thick] (0+7,5.8)--(0+7, 8.3);
\draw[->, thick] (5.8+7,0)--(8.3+7,0);
\end{tikzpicture}
       \caption{The curves $a_i$, $b_i$, $c_i$, the involutions $\rho_1$, $\rho_2$ and the rotation $R$
          on the surface $\Sigma_g$.}
  \label{fig1}
\end{figure}
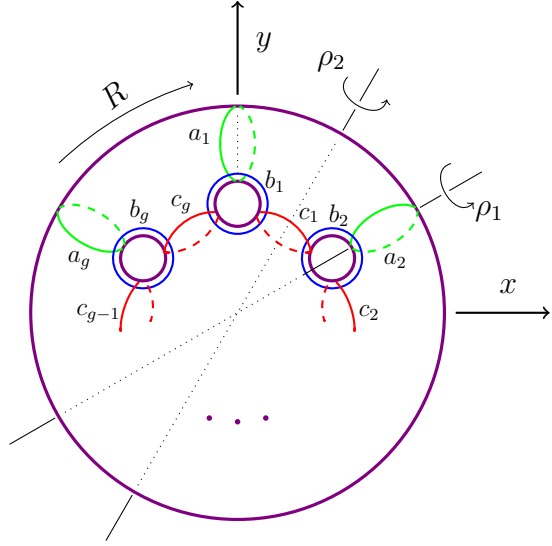

\section{Two new generating sets for $\mod(\Sigma_g)$.}
In this section, we introduce a new generating set for the mapping class group of $\Sigma_6$ and a new generating set for the mapping class group of $\Sigma_g$ for $g \geq 7$. 
We use these sets to generate the mapping class group by three involutions.

Define involutions $\rho_1$ and $\rho_2$ as rotations about the given axis on Figure~\ref{fig1} by $\pi$ degrees.
Recall that  $R$ denotes the $2\pi/g$--rotation of $\Sigma_g$ represented in
Figure~\ref{fig1}. It is a torsion element of order $g$ in the group $\mod(\Sigma_g)$.  
We follow the idea of Korkmaz in~\cite{Korkmaz3inv} to create our generating sets in the next lemmas as corollaries to Theorem~\ref{thm:thmKorkmaz}.
Our idea is to use $\rho_1$, $\rho_2$ as the first two elements and products of Dehn twists as the last element. Observe that $R = \rho_1 \rho_2$.

We use the next lemma (Lemma~\ref{lemma:1}) to create generating set of three involutions for $g=6$ and the second next lemma (Lemma~\ref{lemma:2}) to create generating set of three involutions for $g \geq 7$.
\begin{lemma} \label{lemma:1}
If $g=6$, then the mapping class group $\mod(\Sigma_g)$ is generated by 
three elements
$\rho_1$, $\rho_2$ and $A_1C_1B_3B_4^{-1}C_5^{-1}A_6^{-1}$. 
\end{lemma}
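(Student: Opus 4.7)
By Theorem~\ref{thm:thmKorkmaz}, it suffices to show that $G:=\langle\rho_1,\rho_2,F\rangle$, with $F = A_1C_1B_3B_4^{-1}C_5^{-1}A_6^{-1}$, contains the four Humphries-type generators $R$, $A_1A_2^{-1}$, $B_1B_2^{-1}$, $C_1C_2^{-1}$. Since $R=\rho_1\rho_2$, both $R$ and every conjugate $F_k:=R^kFR^{-k}$ already lie in $G$.

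\textbf{The symmetry trick.} The first step is to read off the action of $\rho_1$ on the Humphries curves from Figure~\ref{fig1}; once this permutation is in hand, a direct term-by-term expansion shows that the six Dehn twists in $F$ have been paired up by $\rho_1$ so that positive and negative twists are interchanged, giving
\[
\rho_1 F \rho_1^{-1}=F^{-1}.
\]
This identity automatically makes $F\rho_1$ an involution---the origin of the third involution in the main Theorem~\ref{thm:1}---and shows that $G$ contains the entire family $F_k^{\pm 1}$ for $k=0,1,\ldots,5$, together with all their $\rho_1$-conjugates $\rho_1F_k\rho_1^{-1}=F_{-k}^{-1}$.

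\textbf{Extracting the pairs; the main obstacle.} The remaining substantial task is to produce $A_1A_2^{-1}$, $B_1B_2^{-1}$ and $C_1C_2^{-1}$ as short words in the $F_k^{\pm 1}$. The main tool is the commutativity relation $t_\alpha t_\beta=t_\beta t_\alpha$ for disjoint $\alpha,\beta$: the index pattern $\{1,6\}$ on the $A$-entries, $\{1,5\}$ on the $C$-entries and $\{3,4\}$ on the $B$-entries of $F$ is engineered so that for suitable shifts $F_k$ and $F_\ell^{\pm1}$, three of the six twists in one word appear among those of the other with opposite sign; once the disjoint twists are commuted past each other, those six cancel in pairs, leaving a two-twist residue that, after a final conjugation by a power of $R$, is one of the three target pairs. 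A purely additive argument cannot do this work: treating all twists as commuting, a $\mathbb{Z}$-combination $\sum n_kF_k$ has $B_j$-coefficient $n_{j-3}-n_{j-4}$, and forcing these to vanish makes every $n_k$ equal, which kills the $A$- and $C$-coefficients as well. So genuine non-abelian cancellation must be used, and the tedious part is keeping honest track of the intersections $a_i\cap b_i$, $b_i\cap c_i$, $b_{i+1}\cap c_i$ through word rearrangements of length at least twelve. Once the three pairs are shown to lie in $G$, Theorem~\ref{thm:thmKorkmaz} yields $G=\mod(\Sigma_6)$.
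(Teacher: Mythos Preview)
Your setup is right: $R=\rho_1\rho_2\in G$, so all shifts $F_k=R^kFR^{-k}$ lie in $G$, and by Theorem~\ref{thm:thmKorkmaz} it suffices to produce $A_1A_2^{-1}$, $B_1B_2^{-1}$, $C_1C_2^{-1}$. But the ``symmetry trick'' is misstated: for $g=6$ the axis of $\rho_1$ passes through the second hole, so $\rho_1$ acts by $a_i\mapsto a_{4-i}$, $b_i\mapsto b_{4-i}$, $c_i\mapsto c_{3-i}$, giving $\rho_1F\rho_1=A_3C_2B_1B_6^{-1}C_4^{-1}A_4^{-1}\neq F^{-1}$. The involution with $\rho F\rho=F^{-1}$ is $\rho_4:=R^2\rho_2R^{-2}$, which sends $(a_1,c_1,b_3)\mapsto(a_6,c_5,b_4)$. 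This is harmless for membership since $\rho_4\in G$, and in fact the paper never uses this identity in the present lemma (it appears only later, to verify that $\rho_4F$ is an involution).

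The real gap is the third paragraph, which is a description of the remaining work rather than a proof, and whose proposed mechanism does not succeed. You hope that for suitable $k,\ell$ the product $F_kF_\ell^{\pm1}$ has three of its six twists cancel against three of the other, leaving a two-twist residue. A direct index check rules this out: the positive curves of $F_k$ are $a_{k+1},c_{k+1},b_{k+3}$ and the negative ones $b_{k+4},c_{k+5},a_k$, and for $k\neq\ell$ at most two of these coincide (with any sign) with curves of $F_\ell$; moreover even in the best case $F_0F_1$ one is left with an eight-twist word containing the noncommuting pairs $B_3,C_2$ and $C_5,B_5$. Your own abelianisation remark already signals that straight cancellation cannot isolate the target pairs. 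The paper's device is different: it uses \emph{conjugation} to change curve types. Writing $F_2=RF_1R^{-1}$, one checks that the diffeomorphism $F_2F_1$ sends the sextuple $(a_2,c_2,b_4,b_5,c_6,a_1)$ to $(a_2,b_3,b_4,c_5,c_6,a_1)$; four curves are fixed because the relevant twists are disjoint from them, while $c_2\mapsto b_3$ and $b_5\mapsto c_5$ come from the braid relation, e.g.\ $t_{F_2F_1(c_2)}=C_2B_3C_2B_3^{-1}C_2^{-1}=t_{b_3}$. Hence $F_3:=(F_2F_1)F_2(F_2F_1)^{-1}=A_2B_3B_4C_5^{-1}C_6^{-1}A_1^{-1}\in G$. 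Repeating this conjugation trick produces a chain $F_3,\dots,F_9$ in which $B$'s and $C$'s are successively traded, until one can take a genuine quotient $F_4F_7^{-1}=A_1A_3^{-1}$; from there $A_1B_3^{-1}$, $B_3C_5^{-1}$, and finally the three required pairs follow. This curve-replacement-by-conjugation step is the missing idea in your sketch.
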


\begin{proof}
Let $F_1=A_1C_1B_3B_4^{-1}C_5^{-1}A_6^{-1}$. Let us denote by
$H$ the subgroup of $\mod(\Sigma_6)$ generated by the set
$\{ \rho_1, \rho_2, F_1\}$.

It suffices to show that $H$ contains $A_1A_2^{-1}$, $B_1B_2^{-1}$ and
$C_1C_2^{-1}$ by Theorem~\ref{thm:thmKorkmaz}.
Now, we define a list of elements to prove the desired result.

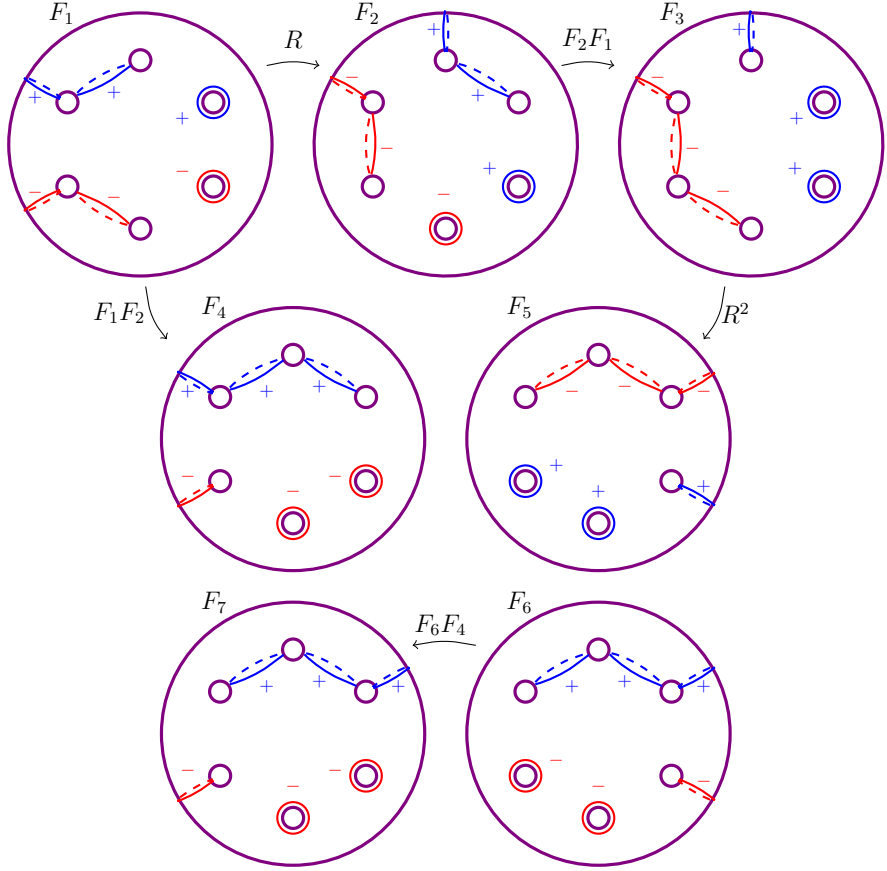
\begin{figure}
\begin{tikzpicture}[scale=0.7]
\begin{scope} [xshift=0cm, yshift=0cm]
 \draw[very thick, violet] (0,0) circle [radius=2.5cm];
 \draw[very thick, violet] (0,1.6) circle [radius=0.2cm]; 
 \draw[very thick, violet, rotate=60] (0,1.6) circle [radius=0.2cm]; 
 \draw[very thick, violet, rotate=120] (0,1.6) circle [radius=0.2cm];  
 \draw[very thick, violet, rotate=180] (0,1.6) circle [radius=0.2cm];  
 \draw[very thick, violet, rotate=-120] (0,1.6) circle [radius=0.2cm]; 
 \draw[very thick, violet, rotate=-60] (0,1.6) circle [radius=0.2cm]; 
 \draw[thick, blue, rotate=0, rounded corners=8pt] (-1.2,0.94)--(-0.8, 1.04)--(-0.18,1.5);  
 \draw[thick, blue, dashed,  rotate=0, rounded corners=8pt] (-1.2,0.98)--(-0.92, 1.28)--(-0.2,1.54);  
 \draw[thick, red, rotate=120, rounded corners=8pt] (-1.2,0.94)--(-0.8, 1.04)--(-0.18,1.5);  
 \draw[thick, red, dashed,  rotate=120, rounded corners=8pt] (-1.2,0.98)--(-0.92, 1.28)--(-0.2,1.54);  
 \draw[thick, blue, rotate=-60] (0,1.6) circle [radius=0.3cm]; 
\draw[thick, red, rotate=-120] (0,1.6) circle [radius=0.3cm]; 
 \draw[thick, blue, rotate=60, rounded corners=8pt] (-0.02,1.8)--(-0.1, 2.15)--(-0.02,2.5);  
 \draw[thick, blue, dashed, rotate=60, rounded corners=8pt]  (0.02,1.8)--(0.1, 2.15)--(0.02,2.5); 
 \draw[thick, dashed, red, rotate=120, rounded corners=8pt] (-0.02,1.8)--(-0.1, 2.15)--(-0.02,2.5);  
 \draw[thick, red, rotate=120, rounded corners=8pt]  (0.02,1.8)--(0.1, 2.15)--(0.02,2.5); 
  \node[scale=0.6, red] at (-2.0,-0.9) {$-$};
  \node[scale=0.6, blue] at (-0.5,1.0) {$+$};
  \node[scale=0.6, red] at (0.8,-0.5) {$-$};
  \node[scale=0.6, blue] at (-2.0,0.9) {$+$};
  \node[scale=0.6, blue] at (0.8,0.5) {$+$};
  \node[scale=0.6, red] at (-0.5,-1.0) {$-$};
\end{scope}

\begin{scope} [xshift=5.8cm, yshift=0cm, rotate=-60]
 \draw[very thick, violet] (0,0) circle [radius=2.5cm];
 \draw[very thick, violet] (0,1.6) circle [radius=0.2cm]; 
 \draw[very thick, violet, rotate=60] (0,1.6) circle [radius=0.2cm]; 
 \draw[very thick, violet, rotate=120] (0,1.6) circle [radius=0.2cm];  
 \draw[very thick, violet, rotate=180] (0,1.6) circle [radius=0.2cm];  
 \draw[very thick, violet, rotate=-120] (0,1.6) circle [radius=0.2cm]; 
 \draw[very thick, violet, rotate=-60] (0,1.6) circle [radius=0.2cm]; 
 \draw[thick, blue, rotate=0, rounded corners=8pt] (-1.2,0.94)--(-0.8, 1.04)--(-0.18,1.5);  
 \draw[thick, blue, dashed,  rotate=0, rounded corners=8pt] (-1.2,0.98)--(-0.92, 1.28)--(-0.2,1.54);  
 \draw[thick, red, rotate=120, rounded corners=8pt] (-1.2,0.94)--(-0.8, 1.04)--(-0.18,1.5);  
 \draw[thick, red, dashed,  rotate=120, rounded corners=8pt] (-1.2,0.98)--(-0.92, 1.28)--(-0.2,1.54);  
 \draw[thick, blue, rotate=-60] (0,1.6) circle [radius=0.3cm]; 
\draw[thick, red, rotate=-120] (0,1.6) circle [radius=0.3cm]; 
 \draw[thick, blue, rotate=60, rounded corners=8pt] (-0.02,1.8)--(-0.1, 2.15)--(-0.02,2.5);  
 \draw[thick, blue, dashed, rotate=60, rounded corners=8pt]  (0.02,1.8)--(0.1, 2.15)--(0.02,2.5); 
 \draw[thick, dashed, red, rotate=120, rounded corners=8pt] (-0.02,1.8)--(-0.1, 2.15)--(-0.02,2.5);  
 \draw[thick, red, rotate=120, rounded corners=8pt]  (0.02,1.8)--(0.1, 2.15)--(0.02,2.5); 
  \node[scale=0.6, red] at (-2.0,-0.9) {$-$};
  \node[scale=0.6, blue] at (-0.5,1.0) {$+$};
  \node[scale=0.6, red] at (0.8,-0.5) {$-$};
  \node[scale=0.6, blue] at (-2.0,0.9) {$+$};
  \node[scale=0.6, blue] at (0.8,0.5) {$+$};
  \node[scale=0.6, red] at (-0.5,-1.0) {$-$};
\end{scope}

\begin{scope} [xshift=11.6cm, yshift=0cm, rotate=-60]
 \draw[very thick, violet] (0,0) circle [radius=2.5cm];
 \draw[very thick, violet] (0,1.6) circle [radius=0.2cm]; 
 \draw[very thick, violet, rotate=60] (0,1.6) circle [radius=0.2cm]; 
 \draw[very thick, violet, rotate=120] (0,1.6) circle [radius=0.2cm];  
 \draw[very thick, violet, rotate=180] (0,1.6) circle [radius=0.2cm];  
 \draw[very thick, violet, rotate=-120] (0,1.6) circle [radius=0.2cm]; 
 \draw[very thick, violet, rotate=-60] (0,1.6) circle [radius=0.2cm]; 
 \draw[thick, red, rotate=180, rounded corners=8pt] (-1.2,0.94)--(-0.8, 1.04)--(-0.18,1.5);  
 \draw[thick, red, dashed,  rotate=180, rounded corners=8pt] (-1.2,0.98)--(-0.92, 1.28)--(-0.2,1.54);  
 \draw[thick, red, rotate=120, rounded corners=8pt] (-1.2,0.94)--(-0.8, 1.04)--(-0.18,1.5);  
 \draw[thick, red, dashed,  rotate=120, rounded corners=8pt] (-1.2,0.98)--(-0.92, 1.28)--(-0.2,1.54);  
 \draw[thick, blue, rotate=-60] (0,1.6) circle [radius=0.3cm]; 
\draw[thick, blue, rotate=0] (0,1.6) circle [radius=0.3cm]; 
 \draw[thick, blue, rotate=60, rounded corners=8pt] (-0.02,1.8)--(-0.1, 2.15)--(-0.02,2.5);  
 \draw[thick, blue, dashed, rotate=60, rounded corners=8pt]  (0.02,1.8)--(0.1, 2.15)--(0.02,2.5); 
 \draw[thick, dashed, red, rotate=120, rounded corners=8pt] (-0.02,1.8)--(-0.1, 2.15)--(-0.02,2.5);  
 \draw[thick, red, rotate=120, rounded corners=8pt]  (0.02,1.8)--(0.1, 2.15)--(0.02,2.5); 
  \node[scale=0.6, red] at (-2.0,-0.9) {$-$};
  \node[scale=0.6, blue] at (-0.0,1.0) {$+$};
  \node[scale=0.6, red] at (0.5,-0.9) {$-$};
  \node[scale=0.6, blue] at (-2.0,0.9) {$+$};
  \node[scale=0.6, blue] at (0.8,0.5) {$+$};
  \node[scale=0.6, red] at (-0.5,-1.0) {$-$};
\end{scope}

\begin{scope} [xshift=8.7cm, yshift=-5.6cm, rotate=-180]
 \draw[very thick, violet] (0,0) circle [radius=2.5cm];
 \draw[very thick, violet] (0,1.6) circle [radius=0.2cm]; 
 \draw[very thick, violet, rotate=60] (0,1.6) circle [radius=0.2cm]; 
 \draw[very thick, violet, rotate=120] (0,1.6) circle [radius=0.2cm];  
 \draw[very thick, violet, rotate=180] (0,1.6) circle [radius=0.2cm];  
 \draw[very thick, violet, rotate=-120] (0,1.6) circle [radius=0.2cm]; 
 \draw[very thick, violet, rotate=-60] (0,1.6) circle [radius=0.2cm]; 
 \draw[thick, red, rotate=180, rounded corners=8pt] (-1.2,0.94)--(-0.8, 1.04)--(-0.18,1.5);  
 \draw[thick, red, dashed,  rotate=180, rounded corners=8pt] (-1.2,0.98)--(-0.92, 1.28)--(-0.2,1.54);  
 \draw[thick, red, rotate=120, rounded corners=8pt] (-1.2,0.94)--(-0.8, 1.04)--(-0.18,1.5);  
 \draw[thick, red, dashed,  rotate=120, rounded corners=8pt] (-1.2,0.98)--(-0.92, 1.28)--(-0.2,1.54);  
 \draw[thick, blue, rotate=-60] (0,1.6) circle [radius=0.3cm]; 
\draw[thick, blue, rotate=0] (0,1.6) circle [radius=0.3cm]; 
 \draw[thick, blue, rotate=60, rounded corners=8pt] (-0.02,1.8)--(-0.1, 2.15)--(-0.02,2.5);  
 \draw[thick, blue, dashed, rotate=60, rounded corners=8pt]  (0.02,1.8)--(0.1, 2.15)--(0.02,2.5); 
 \draw[thick, dashed, red, rotate=120, rounded corners=8pt] (-0.02,1.8)--(-0.1, 2.15)--(-0.02,2.5);  
 \draw[thick, red, rotate=120, rounded corners=8pt]  (0.02,1.8)--(0.1, 2.15)--(0.02,2.5); 
  \node[scale=0.6, red] at (-2.0,-0.9) {$-$};
  \node[scale=0.6, blue] at (-0.0,1.0) {$+$};
  \node[scale=0.6, red] at (0.5,-0.9) {$-$};
  \node[scale=0.6, blue] at (-2.0,0.9) {$+$};
  \node[scale=0.6, blue] at (0.8,0.5) {$+$};
  \node[scale=0.6, red] at (-0.5,-1.0) {$-$};  
\end{scope}

\begin{scope} [xshift=2.9cm, yshift=-5.6cm, rotate=180]
 \draw[very thick, violet] (0,0) circle [radius=2.5cm];
 \draw[very thick, violet] (0,1.6) circle [radius=0.2cm]; 
 \draw[very thick, violet, rotate=60] (0,1.6) circle [radius=0.2cm]; 
 \draw[very thick, violet, rotate=120] (0,1.6) circle [radius=0.2cm];  
 \draw[very thick, violet, rotate=180] (0,1.6) circle [radius=0.2cm];  
 \draw[very thick, violet, rotate=-120] (0,1.6) circle [radius=0.2cm]; 
 \draw[very thick, violet, rotate=-60] (0,1.6) circle [radius=0.2cm]; 
 \draw[thick, blue, rotate=180, rounded corners=8pt] (-1.2,0.94)--(-0.8, 1.04)--(-0.18,1.5);  
 \draw[thick, blue, dashed,  rotate=180, rounded corners=8pt] (-1.2,0.98)--(-0.92, 1.28)--(-0.2,1.54);  
 \draw[thick, blue, rotate=120, rounded corners=8pt] (-1.2,0.94)--(-0.8, 1.04)--(-0.18,1.5);  
 \draw[thick, blue, dashed,  rotate=120, rounded corners=8pt] (-1.2,0.98)--(-0.92, 1.28)--(-0.2,1.54);  
 \draw[thick, red, rotate=60] (0,1.6) circle [radius=0.3cm]; 
\draw[thick, red, rotate=0] (0,1.6) circle [radius=0.3cm]; 
 \draw[thick, red, rotate=-60, rounded corners=8pt] (-0.02,1.8)--(-0.1, 2.15)--(-0.02,2.5);  
 \draw[thick, red, dashed, rotate=-60, rounded corners=8pt]  (0.02,1.8)--(0.1, 2.15)--(0.02,2.5); 
 \draw[thick, dashed, blue, rotate=-120, rounded corners=8pt] (-0.02,1.8)--(-0.1, 2.15)--(-0.02,2.5);  
 \draw[thick, blue, rotate=-120, rounded corners=8pt]  (0.02,1.8)--(0.1, 2.15)--(0.02,2.5); 
  \node[scale=0.6, blue] at (2.0,-0.9) {$+$};
  \node[scale=0.6, red] at (-0.0,1.0) {$-$};
  \node[scale=0.6, blue] at (0.5,-0.9) {$+$};
  \node[scale=0.6, red] at (-0.8,0.7) {$-$};
  \node[scale=0.6, red] at (2.0,0.7) {$-$};
  \node[scale=0.6, blue] at (-0.5,-1.0) {$+$};
\end{scope}

\begin{scope} [xshift=8.7cm, yshift=-11.2cm, rotate=-180]
 \draw[very thick, violet] (0,0) circle [radius=2.5cm];
 \draw[very thick, violet] (0,1.6) circle [radius=0.2cm]; 
 \draw[very thick, violet, rotate=60] (0,1.6) circle [radius=0.2cm]; 
 \draw[very thick, violet, rotate=120] (0,1.6) circle [radius=0.2cm];  
 \draw[very thick, violet, rotate=180] (0,1.6) circle [radius=0.2cm];  
 \draw[very thick, violet, rotate=-120] (0,1.6) circle [radius=0.2cm]; 
 \draw[very thick, violet, rotate=-60] (0,1.6) circle [radius=0.2cm]; 
 \draw[thick, blue, rotate=180, rounded corners=8pt] (-1.2,0.94)--(-0.8, 1.04)--(-0.18,1.5);  
 \draw[thick, blue, dashed,  rotate=180, rounded corners=8pt] (-1.2,0.98)--(-0.92, 1.28)--(-0.2,1.54);  
 \draw[thick, blue, rotate=120, rounded corners=8pt] (-1.2,0.94)--(-0.8, 1.04)--(-0.18,1.5);  
 \draw[thick, blue, dashed,  rotate=120, rounded corners=8pt] (-1.2,0.98)--(-0.92, 1.28)--(-0.2,1.54);  
 \draw[thick, red, rotate=-60] (0,1.6) circle [radius=0.3cm]; 
\draw[thick, red, rotate=0] (0,1.6) circle [radius=0.3cm]; 
 \draw[thick, red, rotate=60, rounded corners=8pt] (-0.02,1.8)--(-0.1, 2.15)--(-0.02,2.5);  
 \draw[thick, red, dashed, rotate=60, rounded corners=8pt]  (0.02,1.8)--(0.1, 2.15)--(0.02,2.5); 
 \draw[thick, dashed, blue, rotate=120, rounded corners=8pt] (-0.02,1.8)--(-0.1, 2.15)--(-0.02,2.5);  
 \draw[thick, blue, rotate=120, rounded corners=8pt]  (0.02,1.8)--(0.1, 2.15)--(0.02,2.5); 
  \node[scale=0.6, blue] at (-2.0,-0.9) {$+$};
  \node[scale=0.6, red] at (-0.0,1.0) {$-$};
  \node[scale=0.6, blue] at (0.5,-0.9) {$+$};
  \node[scale=0.6, red] at (-2.0,0.9) {$-$};
  \node[scale=0.6, red] at (0.8,0.5) {$-$};
  \node[scale=0.6, blue] at (-0.5,-1.0) {$+$}; 
\end{scope}

\begin{scope} [xshift=2.9cm, yshift=-11.2cm, rotate=180]
\draw[very thick, violet] (0,0) circle [radius=2.5cm];
 \draw[very thick, violet] (0,1.6) circle [radius=0.2cm]; 
 \draw[very thick, violet, rotate=60] (0,1.6) circle [radius=0.2cm]; 
 \draw[very thick, violet, rotate=120] (0,1.6) circle [radius=0.2cm];  
 \draw[very thick, violet, rotate=180] (0,1.6) circle [radius=0.2cm];  
 \draw[very thick, violet, rotate=-120] (0,1.6) circle [radius=0.2cm]; 
 \draw[very thick, violet, rotate=-60] (0,1.6) circle [radius=0.2cm]; 
 \draw[thick, blue, rotate=180, rounded corners=8pt] (-1.2,0.94)--(-0.8, 1.04)--(-0.18,1.5);  
 \draw[thick, blue, dashed,  rotate=180, rounded corners=8pt] (-1.2,0.98)--(-0.92, 1.28)--(-0.2,1.54);  
 \draw[thick, blue, rotate=120, rounded corners=8pt] (-1.2,0.94)--(-0.8, 1.04)--(-0.18,1.5);  
 \draw[thick, blue, dashed,  rotate=120, rounded corners=8pt] (-1.2,0.98)--(-0.92, 1.28)--(-0.2,1.54);  
 \draw[thick, red, rotate=60] (0,1.6) circle [radius=0.3cm]; 
\draw[thick, red, rotate=0] (0,1.6) circle [radius=0.3cm]; 
 \draw[thick, red, rotate=-60, rounded corners=8pt] (-0.02,1.8)--(-0.1, 2.15)--(-0.02,2.5);  
 \draw[thick, red, dashed, rotate=-60, rounded corners=8pt]  (0.02,1.8)--(0.1, 2.15)--(0.02,2.5); 
 \draw[thick, dashed, blue, rotate=120, rounded corners=8pt] (-0.02,1.8)--(-0.1, 2.15)--(-0.02,2.5);  
 \draw[thick, blue, rotate=120, rounded corners=8pt]  (0.02,1.8)--(0.1, 2.15)--(0.02,2.5); 
  \node[scale=0.6, blue] at (-2.0,-0.9) {$+$};
  \node[scale=0.6, red] at (-0.0,1.0) {$-$};
  \node[scale=0.6, blue] at (0.5,-0.9) {$+$};
  \node[scale=0.6, red] at (-0.8,0.7) {$-$};
  \node[scale=0.6, red] at (2.0,0.7) {$-$};
  \node[scale=0.6, blue] at (-0.5,-1.0) {$+$}; 
 \end{scope}

	 \node[scale=0.8] at (-1.5,2.5) {$F_1$};
 	\node[scale=0.8] at (4.3,2.5) {$F_2$};
 	\node[scale=0.8] at (10.1,2.5) {$F_3$};
 	\node[scale=0.8] at (7.2,-3.1) {$F_5$};
 	\node[scale=0.8] at (1.4,-3.1) {$F_4$};
 	\node[scale=0.8] at (1.4,-8.7) {$F_7$};
 	\node[scale=0.8] at (7.2,-8.7) {$F_6$};
 	\draw[ ->, rounded corners=5pt] (2.4,1.5)--(2.9,1.6)-- (3.4,1.5);
 	\node[scale=0.8] at (2.9,2) {$R$};
 	\draw[ ->, rounded corners=5pt] (2.4+5.6,1.5)--(2.9+5.6,1.6)-- (3.4+5.6,1.5);
 	\node[scale=0.8] at (2.9+5.6,2) {$F_2F_1$};
 	\draw[ ->, rounded corners=5pt] (0.1,-2.7)--(0.2, -3.3)--(0.5,-3.7);
 	\node[scale=0.8] at (-0.4,-3.2) {$F_1F_2$};
 	\draw[ ->, rounded corners=5pt] (11.1,-2.7)--(11, -3.3)--(10.7,-3.7);
 	\node[scale=0.8] at (11.3,-3.2) {$R^2$};
 	\draw[ ->, rounded corners=5pt] (5.7+0.67,-3.92-5.6)--(5.7+0.07,-3.82-5.6)--(5.7-0.53,-3.92-5.6) ;
 	\node[scale=0.8] at (5.7,-3.53-5.6) {$F_6F_4$};
\end{tikzpicture}
 	\caption{Proof of Lemma~\ref{lemma:1}.}
\end{figure}

Let \begin{eqnarray*}
F_2
&=& RF_1R^{-1} \\
&=& R(A_1C_1B_3B_4^{-1}C_5^{-1}A_6^{-1})R^{-1}\\
&=& RA_1R^{-1}RC_1R^{-1}RB_3R^{-1}RB_4^{-1}R^{-1}RC_5^{-1}R^{-1}RA_6^{-1}R^{-1}\\
&=& Rt_{a_1}R^{-1}Rt_{c_1}R^{-1}Rt_{b_3}R^{-1}Rt_{b_4}^{-1}R^{-1}Rt_{c_5}^{-1}R^{-1}Rt_{a_6}^{-1}R^{-1}\\
&=& t_{R(a_1)}t_{R(c_1)}t_{R(b_3)}t_{R(b_4)}^{-1}t_{R(c_5)}^{-1}t_{R(a_6)}^{-1}\\
&=& t_{a_2}t_{c_2}t_{b_4}t_{b_5}^{-1}t_{c_6}^{-1}t_{a_1}^{-1}\\
&=& A_2C_2B_4B_5^{-1}C_6^{-1}A_1^{-1}.
\end{eqnarray*}

Then, $F_2F_1(a_2,c_2,b_4,b_5,c_6,a_1)=(a_2,b_3,b_4,c_5,c_6,a_1)$ so that\\
$F_3=A_2B_3B_4C_5^{-1}C_6^{-1}A_1^{-1}\in H$. Note that $F_2F_1(c_2) = b_3$ 
since \begin{eqnarray*}
t_{F_2F_1(c_2)}
&=& (F_2F_1)t_{c_2}(F_2F_1)^{-1}\\
&=& F_2F_1C_2F_1^{-1}F_2^{-1}\\
&=& C_2B_3C_2B_3^{-1}C_2^{-1} \\
&=& (t_{c_2}t_{b_3})t_{c_2}(t_{c_2}t_{b_3})^{-1}\\
&=& t_{t_{c_2}t_{b_3}(c_2)}\\
&=& t_{b_3}.
\end{eqnarray*}

$F_1F_2(a_1,c_1,b_3,b_4,c_5,a_6)=(a_1,c_1,c_2,b_4,b_5,a_6)$ so that\\
$F_4=A_1C_1C_2B_4^{-1}B_5^{-1}A_6^{-1}\in H$.

Let
\[
F_5 = R^2F_3R^{-2} = A_4B_5B_6C_1^{-1}C_2^{-1}A_3^{-1}
\]
and
\[
F_6 = F_5^{-1} = A_3C_2C_1B_6^{-1}B_5^{-1}A_4^{-1}.
\]

Then, $F_6F_4(a_3,c_2,c_1,b_6,b_5,a_4)=(a_3,c_2,c_1,a_6,b_5,b_4)$ so that\\
$F_7=A_3C_2C_1A_6^{-1}B_5^{-1}B_4^{-1}\in H$.

Let 
\[
F_8 = F_4F_7^{-1} = A_1A_3^{-1}.
\]

$F_8^{-1}F_3(a_1,a_3)=(a_1,b_3)$ so that $A_1B_3^{-1}$$\in$$H$ and then by conjugating $A_1B_3^{-1}$ with $R$ iteratively, we get $A_iB_{i+2}^{-1}$$\in$$H$ $\forall i$.

Let 
\[
F_9 = B_3B_5^{-1} = (B_3A_1^{-1})(A_1A_3^{-1})(A_3B_5^{-1}).
\]

$F_9F_3(b_3,b_5)=(b_3,c_5)$ so that $B_3C_5^{-1}$$\in$$H$ and then \mbox{$B_iC_{i+2}^{-1}$$\in$$H$ $\forall i$}.\\
$\rho_1(B_2C_4^{-1})\rho_1=B_2C_5^{-1}$$\in$$H$ and then \mbox{$B_iC_{i+3}^{-1}$$\in$$H$ $\forall i$}.\\
$B_1B_2^{-1}=(B_1C_4^{-1})(C_4B_2^{-1})$$\in$$H$ and then $B_3B_4^{-1}$$\in$$H$.\\
$A_1A_2^{-1}=(A_1B_3^{-1})(B_3B_4^{-1})(B_4A_2^{-1})$$\in$$H$.\\
$C_1C_2^{-1}=(C_1B_5^{-1})(B_5C_2^{-1})$$\in$$H$.

It follows from  Theorem~\ref{thm:thmKorkmaz} that
$H=\mod(\Sigma_6)$, completing the proof of the lemma.
 \end{proof}

 
\begin{lemma} \label{lemma:2}
If $g \geq 7$, then the mapping class group  $\mod(\Sigma_g)$ is generated by 
three elements
$\rho_1$, $\rho_2$ and $A_1C_1B_3B_5^{-1}C_6^{-1}A_7^{-1}$. 
\end{lemma}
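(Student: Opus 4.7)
The strategy is to adapt the argument of Lemma~\ref{lemma:1} with a careful re-engineering of the intermediate elements to match the new indices. Set $F_1 = A_1 C_1 B_3 B_5^{-1} C_6^{-1} A_7^{-1}$ and let $H$ be the subgroup of $\mod(\Sigma_g)$ generated by $\{\rho_1, \rho_2, F_1\}$. Since $R = \rho_1\rho_2$ lies in $H$, conjugation by $R$ preserves $H$, and by Theorem~\ref{thm:thmKorkmaz} it suffices to show that $A_1A_2^{-1}$, $B_1B_2^{-1}$, and $C_1C_2^{-1}$ all belong to $H$. A preliminary check is that the six curves $a_1, c_1, b_3, b_5, c_6, a_7$ are pairwise disjoint (using that $c_i$ meets only $b_i$ and $b_{i+1}$ among the $b_j$ and is disjoint from every $a_j$ and every $c_j$), so $F_1$ is unambiguously the product of commuting twists.

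The first step is to form $F_2 = RF_1R^{-1} = A_2 C_2 B_4 B_6^{-1} C_7^{-1} A_8^{-1}$. The essential single-point transverse intersections between curves of $F_1$ and $F_2$ are $c_2 \cap b_3$ and $b_6 \cap c_6$; all other pairs are disjoint. These give the braid-relation identities $t_{c_2}t_{b_3}(c_2) = b_3$ and $t_{b_6}t_{c_6}(b_6) = c_6$. Consequently $F_2F_1$ and $F_1F_2$ (and, after a further $R$-conjugation, their analogs $F_3, F_4, \dots$) differ from $F_1$ and $F_2$ only by the replacement of certain curves inside the product, yielding new elements of $H$ whose Dehn-twist factors are supported on a shifted set of curves.

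Continuing as in Lemma~\ref{lemma:1}, one conjugates the newly obtained elements by further powers of $R$, and composes them in a carefully chosen order, so that each composition performs a single curve-swap via the braid relation while the remaining twist factors commute through. After enough such reductions, an element of the form $A_iA_j^{-1}$ emerges in $H$; its entire $R$-orbit then lies in $H$. Combining this with the previously constructed elements produces mixed relations of the form $A_iB_{i+2}^{-1}$ and $B_iC_{i+2}^{-1}$ (or analogs with slightly different offsets forced by the indices $\{1,1,3,5,6,7\}$), and conjugation by $\rho_1$ and $\rho_2$ further broadens the set of available relations. Finally, $A_1A_2^{-1}$, $B_1B_2^{-1}$, and $C_1C_2^{-1}$ are expressed as telescoping products of these basic elements in $H$, and Theorem~\ref{thm:thmKorkmaz} concludes that $H = \mod(\Sigma_g)$.

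The main obstacle is the bookkeeping: one must verify at each stage that every intended curve-swap arises from a genuine one-point transverse intersection and that the surrounding twist factors commute past the curves being moved, which is nontrivial because the six indices $\{1,1,3,5,6,7\}$ are spaced differently from the $\{1,1,3,4,5,6\}$ of Lemma~\ref{lemma:1}, and the exact chain used there does not transfer verbatim. A secondary subtlety is the case $g = 7$, where $a_8 = a_1$ causes $F_2$ to re-introduce $A_1^{-1}$; one must check that the argument still runs uniformly across all $g \geq 7$, or else dispatch $g = 7$ by a slightly modified chain of substitutions.
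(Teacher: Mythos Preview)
Your setup is correct and matches the paper: $H=\langle\rho_1,\rho_2,F_1\rangle$, $R=\rho_1\rho_2\in H$, the six curves in $F_1$ are pairwise disjoint, and the only one-point intersections between the curves of $F_1$ and those of $F_2=RF_1R^{-1}$ are $c_2\cap b_3$ and $b_6\cap c_6$. The paper begins exactly this way.

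The gap is that from this point on your argument is a promissory sketch rather than a proof. You write ``continuing as in Lemma~\ref{lemma:1}\ldots\ an element of the form $A_iA_j^{-1}$ emerges'' and then propose to extract $A_iB_{i+2}^{-1}$ and $B_iC_{i+2}^{-1}$, but you also concede that ``the exact chain used there does not transfer verbatim'' and that the bookkeeping is ``the main obstacle.'' That obstacle is precisely what a proof of this lemma must overcome, and the specific pattern you suggest (mirroring Lemma~\ref{lemma:1}: first $A_iA_j^{-1}$, then offset-$2$ relations) is not what the indices $\{1,1,3,5,6,7\}$ actually produce. The paper's chain is genuinely different from Lemma~\ref{lemma:1}: from $F_2F_1$ one gets $F_3=A_2B_3B_4C_6^{-1}C_7^{-1}A_{\ast}^{-1}$ (with $\ast=1$ if $g=7$, $\ast=8$ if $g\ge 8$); then $F_4=R^{-1}F_3R$ and $F_4F_3$ yield $F_5$ differing from $F_4$ only by $b_2\mapsto a_2$, so that $F_4F_5^{-1}=B_2A_2^{-1}\in H$. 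This is the first simple element obtained, and it is of type $B_iA_i^{-1}$, not $A_iA_j^{-1}$. One then \emph{feeds it back} into $F_1$ to form $F_6=(A_3B_3^{-1})F_1$, conjugates by $R$, and a further braid swap yields $B_6C_6^{-1}\in H$. Finally $\rho_2(B_1C_1^{-1})\rho_2=B_2C_1^{-1}$ gives $B_{i+1}C_i^{-1}\in H$, and the three target elements follow by telescoping. None of these offsets are $2$; the Lemma~\ref{lemma:1} template you invoke does not supply them.

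You are right that $g=7$ requires separate bookkeeping because $a_8=a_1$, and the paper indeed tracks this case in parallel; but the substantive issue is that your proposal stops short of exhibiting any working chain at all.
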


\begin{proof}
We follow a similar pattern to Lemma ~\ref{lemma:1}.

Let $F_1=A_1C_1B_3B_5^{-1}C_6^{-1}A_7^{-1}$ and
$H$ be the subgroup of $\mod(\Sigma_g)$ generated by the set
$\{ \rho_1, \rho_2, F_1\}$. It suffices to show that $H$ contains $A_1A_2^{-1}$, $B_1B_2^{-1}$ and
$C_1C_2^{-1}$. 
Assume $g=7$. \textcolor{red}{(Respectively, $g \geq 8$.)}

Let
\[
F_2 = RF_1R^{-1} = A_2C_2B_4B_6^{-1}C_7^{-1}A_1^{-1}. 
\] \textcolor{red}{(Resp., $F_2=A_2C_2B_4B_6^{-1}C_7^{-1}A_8^{-1}$).}
 
Then, $F_2F_1(a_2,c_2,b_4,b_6,c_7,a_1)=(a_2,b_3,b_4,c_6,c_7,a_1)$ so that\\
$F_3=A_2B_3B_4C_6^{-1}C_7^{-1}A_1^{-1}\in H$. \textcolor{red}{(Resp., $F_3=A_2B_3B_4C_6^{-1}C_7^{-1}A_8^{-1}$).}

Let \[
F_4 = R^{-1}F_3R = A_1B_2B_3C_5^{-1}C_6^{-1}A_7^{-1}.
\]

Then, $F_4F_3(a_1,b_2,b_3,c_5,c_6,a_7)=(a_1,a_2,b_3,c_5,c_6,a_7)$ so that\\
$F_5=A_1A_2B_3C_5^{-1}C_6^{-1}A_7^{-1}\in H$.

$F_4F_5^{-1} = B_2A_2^{-1}$ so that $B_2A_2^{-1}$$\in$$H$ and then by conjugating $B_2A_2^{-1}$ with $R$ iteratively, we get $B_iA_i^{-1}$$\in$$H$ $\forall i$.

Let \[
F_6
= (A_3B_3^{-1})F_1 = A_1C_1A_3B_5^{-1}C_6^{-1}A_7^{-1}
\] and
\[
F_7 = RF_6R^{-1} = A_2C_2A_4B_6^{-1}C_7^{-1}A_1^{-1}. 
\] \textcolor{red}{(Resp., $F_7=A_2C_2A_4B_6^{-1}C_7^{-1}A_8^{-1}$).}

Then, $F_7F_6(a_2,c_2,a_4,b_6,c_7,a_1)=(a_2,c_2,a_4,c_6,c_7,a_1)$ so that\\
$F_8=A_2C_2A_4C_6^{-1}C_7^{-1}A_1^{-1}\in H$. \textcolor{red}{(Resp., $F_8=A_2C_2A_4C_6^{-1}C_7^{-1}A_8^{-1}$).} 

$F_7^{-1}F_8=B_6C_6^{-1}$ so that $B_6C_6^{-1}$$\in$$H$ and then \mbox{$B_iC_i^{-1}$$\in$$H$ $\forall i$}.\\
$\rho_2(B_1C_1^{-1})\rho_2=B_2C_1^{-1}$$\in$$H$ and then \mbox{$B_{i+1}C_i^{-1}$$\in$$H$ $\forall i$}.\\
$C_1C_2^{-1}=(C_1B_2^{-1})(B_2C_2^{-1})$$\in$$H$.\\
$B_1B_2^{-1}=(B_1C_1^{-1})(C_1C_2^{-1})(C_2B_2^{-1})$$\in$$H$ and then $B_3B_4^{-1}$$\in$$H$.\\
$A_1A_2^{-1}=(A_1B_1^{-1})(B_1B_2^{-1})(B_2A_2^{-1})$$\in$$H$.


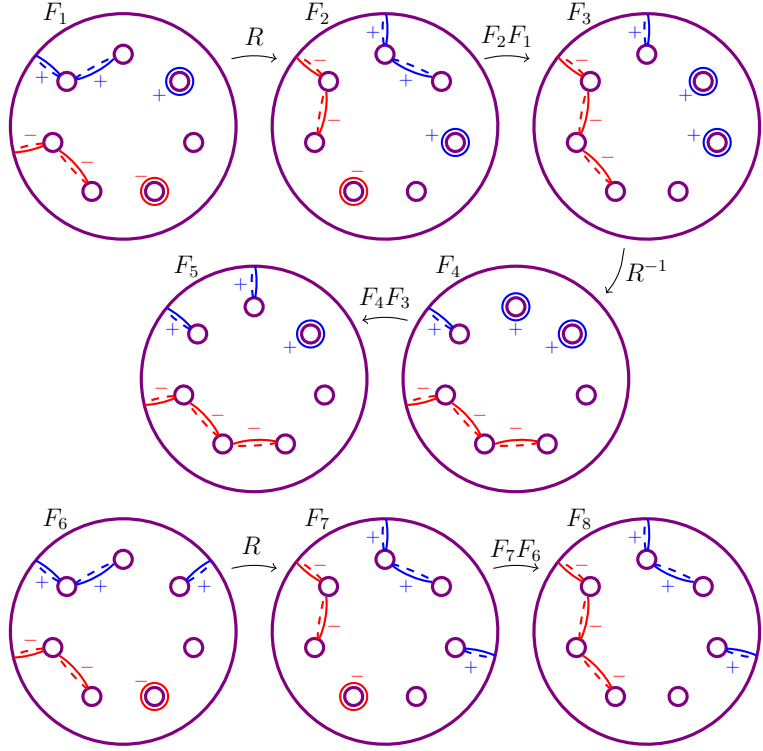
\begin{figure}
\begin{tikzpicture}[scale=0.6]
\begin{scope} [xshift=0cm, yshift=0cm]
 \draw[very thick, violet] (0,0) circle [radius=2.5cm];
 \draw[very thick, violet] (0,1.6) circle [radius=0.2cm]; 
 \draw[very thick, violet, rotate=360/7] (0,1.6) circle [radius=0.2cm]; 
 \draw[very thick, violet, rotate=360/7*2] (0,1.6) circle [radius=0.2cm];  
 \draw[very thick, violet, rotate=360/7*3] (0,1.6) circle [radius=0.2cm];  
 \draw[very thick, violet, rotate=360/7*4] (0,1.6) circle [radius=0.2cm]; 
 \draw[very thick, violet, rotate=360/7*5] (0,1.6) circle [radius=0.2cm]; 
 \draw[very thick, violet, rotate=360/7*6] (0,1.6) circle [radius=0.2cm]; 
 \draw[thick, blue, rotate=0, rounded corners=8pt](-1.04,1.04)--(-0.57, 1.14)--(-0.21,1.5);   
 \draw[thick, blue, dashed,  rotate=0, rounded corners=8pt] (-1.04,1.08)--(-0.69, 1.35)--(-0.2,1.54);  
 \draw[thick, red, rotate=360/7*2, rounded corners=8pt](-1.04,1.04)--(-0.57, 1.14)--(-0.21,1.5);  
 \draw[thick, red, dashed,  rotate=360/7*2, rounded corners=8pt](-1.04,1.08)--(-0.69, 1.35)--(-0.2,1.54);    
 \draw[thick, blue, rotate=360/7*6] (0,1.6) circle [radius=0.3cm]; 
\draw[thick, red, rotate=360/7*4] (0,1.6) circle [radius=0.3cm]; 
 \draw[thick, blue, rotate=360/7, rounded corners=6pt] (0.02,1.8)--(0.1, 2.15)--(0.02,2.5);  
 \draw[thick, blue, dashed, rotate=360/7, rounded corners=6pt]   (-0.02,1.8)--(-0.1, 2.15)--(-0.02,2.5);  
 \draw[thick, dashed, red, rotate=360/7*2, rounded corners=6pt]  (0.02,1.8)--(0.1, 2.15)--(0.02,2.5);  
 \draw[thick, red, rotate=360/7*2, rounded corners=6pt]   (-0.02,1.8)--(-0.1, 2.15)--(-0.02,2.5);  
  \node[scale=0.6, red] at (-2.05,-0.2) {$-$};
  \node[scale=0.6, blue] at (0.8,0.7) {$+$};
  \node[scale=0.6, red] at (0.4,-1.1) {$-$};
  \node[scale=0.6, blue] at (-1.8,1.1) {$+$};
  \node[scale=0.6, blue] at (-0.5,1.0) {$+$};
  \node[scale=0.6, red] at (-0.8,-0.8) {$-$};
\end{scope}

\begin{scope} [xshift=5.8cm, yshift=0cm, rotate=-360/7]
  \draw[very thick, violet] (0,0) circle [radius=2.5cm];
 \draw[very thick, violet] (0,1.6) circle [radius=0.2cm]; 
 \draw[very thick, violet, rotate=360/7] (0,1.6) circle [radius=0.2cm]; 
 \draw[very thick, violet, rotate=360/7*2] (0,1.6) circle [radius=0.2cm];  
 \draw[very thick, violet, rotate=360/7*3] (0,1.6) circle [radius=0.2cm];  
 \draw[very thick, violet, rotate=360/7*4] (0,1.6) circle [radius=0.2cm]; 
 \draw[very thick, violet, rotate=360/7*5] (0,1.6) circle [radius=0.2cm]; 
 \draw[very thick, violet, rotate=360/7*6] (0,1.6) circle [radius=0.2cm]; 
 \draw[thick, blue, rotate=0, rounded corners=8pt](-1.04,1.04)--(-0.57, 1.14)--(-0.21,1.5);   
 \draw[thick, blue, dashed,  rotate=0, rounded corners=8pt] (-1.04,1.08)--(-0.69, 1.35)--(-0.2,1.54);  
 \draw[thick, red, rotate=360/7*2, rounded corners=8pt](-1.04,1.04)--(-0.57, 1.14)--(-0.21,1.5);  
 \draw[thick, red, dashed,  rotate=360/7*2, rounded corners=8pt](-1.04,1.08)--(-0.69, 1.35)--(-0.2,1.54);    
 \draw[thick, blue, rotate=360/7*6] (0,1.6) circle [radius=0.3cm]; 
\draw[thick, red, rotate=360/7*4] (0,1.6) circle [radius=0.3cm]; 
 \draw[thick, blue, rotate=360/7, rounded corners=6pt] (0.02,1.8)--(0.1, 2.15)--(0.02,2.5);  
 \draw[thick, blue, dashed, rotate=360/7, rounded corners=6pt]   (-0.02,1.8)--(-0.1, 2.15)--(-0.02,2.5);  
 \draw[thick, dashed, red, rotate=360/7*2, rounded corners=6pt]  (0.02,1.8)--(0.1, 2.15)--(0.02,2.5);  
 \draw[thick, red, rotate=360/7*2, rounded corners=6pt]   (-0.02,1.8)--(-0.1, 2.15)--(-0.02,2.5);  
  \node[scale=0.6, red] at (-2.05,-0.2) {$-$};
  \node[scale=0.6, blue] at (0.8,0.7) {$+$};
  \node[scale=0.6, red] at (0.4,-1.1) {$-$};
  \node[scale=0.6, blue] at (-1.8,1.1) {$+$};
  \node[scale=0.6, blue] at (-0.5,1.0) {$+$};
  \node[scale=0.6, red] at (-0.8,-0.8) {$-$};
\end{scope}

\begin{scope} [xshift=11.6cm, yshift=0cm, rotate=-360/7]
  \draw[very thick, violet] (0,0) circle [radius=2.5cm];
 \draw[very thick, violet] (0,1.6) circle [radius=0.2cm]; 
 \draw[very thick, violet, rotate=360/7] (0,1.6) circle [radius=0.2cm]; 
 \draw[very thick, violet, rotate=360/7*2] (0,1.6) circle [radius=0.2cm];  
 \draw[very thick, violet, rotate=360/7*3] (0,1.6) circle [radius=0.2cm];  
 \draw[very thick, violet, rotate=360/7*4] (0,1.6) circle [radius=0.2cm]; 
 \draw[very thick, violet, rotate=360/7*5] (0,1.6) circle [radius=0.2cm]; 
 \draw[very thick, violet, rotate=360/7*6] (0,1.6) circle [radius=0.2cm]; 
 \draw[thick, red, rotate=360/7*3, rounded corners=8pt](-1.04,1.04)--(-0.57, 1.14)--(-0.21,1.5);   
 \draw[thick, red, dashed,  rotate=360/7*3, rounded corners=8pt] (-1.04,1.08)--(-0.69, 1.35)--(-0.2,1.54);  
 \draw[thick, red, rotate=360/7*2, rounded corners=8pt](-1.04,1.04)--(-0.57, 1.14)--(-0.21,1.5);  
 \draw[thick, red, dashed,  rotate=360/7*2, rounded corners=8pt](-1.04,1.08)--(-0.69, 1.35)--(-0.2,1.54);    
 \draw[thick, blue, rotate=360/7*6] (0,1.6) circle [radius=0.3cm]; 
\draw[thick, blue, rotate=0] (0,1.6) circle [radius=0.3cm]; 
 \draw[thick, blue, rotate=360/7, rounded corners=6pt] (0.02,1.8)--(0.1, 2.15)--(0.02,2.5);  
 \draw[thick, blue, dashed, rotate=360/7, rounded corners=6pt]   (-0.02,1.8)--(-0.1, 2.15)--(-0.02,2.5);  
 \draw[thick, dashed, red, rotate=360/7*2, rounded corners=6pt]  (0.02,1.8)--(0.1, 2.15)--(0.02,2.5);  
 \draw[thick, red, rotate=360/7*2, rounded corners=6pt]   (-0.02,1.8)--(-0.1, 2.15)--(-0.02,2.5);  
  \node[scale=0.6, red] at (-2.05,-0.2) {$-$};
  \node[scale=0.6, blue] at (0.8,0.7) {$+$};
  \node[scale=0.6, red] at (0.0,-1.1) {$-$};
  \node[scale=0.6, blue] at (-1.8,1.1) {$+$};
  \node[scale=0.6, blue] at (-0.0,1.1) {$+$};
  \node[scale=0.6, red] at (-0.8,-0.8) {$-$};
\end{scope}

\begin{scope} [xshift=8.7cm, yshift=-5.6cm, rotate=0]
  \draw[very thick, violet] (0,0) circle [radius=2.5cm];
 \draw[very thick, violet] (0,1.6) circle [radius=0.2cm]; 
 \draw[very thick, violet, rotate=360/7] (0,1.6) circle [radius=0.2cm]; 
 \draw[very thick, violet, rotate=360/7*2] (0,1.6) circle [radius=0.2cm];  
 \draw[very thick, violet, rotate=360/7*3] (0,1.6) circle [radius=0.2cm];  
 \draw[very thick, violet, rotate=360/7*4] (0,1.6) circle [radius=0.2cm]; 
 \draw[very thick, violet, rotate=360/7*5] (0,1.6) circle [radius=0.2cm]; 
 \draw[very thick, violet, rotate=360/7*6] (0,1.6) circle [radius=0.2cm]; 
 \draw[thick, red, rotate=360/7*3, rounded corners=8pt](-1.04,1.04)--(-0.57, 1.14)--(-0.21,1.5);   
 \draw[thick, red, dashed,  rotate=360/7*3, rounded corners=8pt] (-1.04,1.08)--(-0.69, 1.35)--(-0.2,1.54);  
 \draw[thick, red, rotate=360/7*2, rounded corners=8pt](-1.04,1.04)--(-0.57, 1.14)--(-0.21,1.5);  
 \draw[thick, red, dashed,  rotate=360/7*2, rounded corners=8pt](-1.04,1.08)--(-0.69, 1.35)--(-0.2,1.54);    
 \draw[thick, blue, rotate=360/7*6] (0,1.6) circle [radius=0.3cm]; 
\draw[thick, blue, rotate=0] (0,1.6) circle [radius=0.3cm]; 
 \draw[thick, blue, rotate=360/7, rounded corners=6pt] (0.02,1.8)--(0.1, 2.15)--(0.02,2.5);  
 \draw[thick, blue, dashed, rotate=360/7, rounded corners=6pt]   (-0.02,1.8)--(-0.1, 2.15)--(-0.02,2.5);  
 \draw[thick, dashed, red, rotate=360/7*2, rounded corners=6pt]  (0.02,1.8)--(0.1, 2.15)--(0.02,2.5);  
 \draw[thick, red, rotate=360/7*2, rounded corners=6pt]   (-0.02,1.8)--(-0.1, 2.15)--(-0.02,2.5);  
  \node[scale=0.6, red] at (-2.05,-0.2) {$-$};
  \node[scale=0.6, blue] at (0.8,0.7) {$+$};
  \node[scale=0.6, red] at (0.0,-1.1) {$-$};
  \node[scale=0.6, blue] at (-1.8,1.1) {$+$};
  \node[scale=0.6, blue] at (-0.0,1.1) {$+$};
  \node[scale=0.6, red] at (-0.8,-0.8) {$-$}; 
  \end{scope}

\begin{scope} [xshift=2.9cm, yshift=-5.6cm, rotate=0]
 \draw[very thick, violet] (0,0) circle [radius=2.5cm];
 \draw[very thick, violet] (0,1.6) circle [radius=0.2cm]; 
 \draw[very thick, violet, rotate=360/7] (0,1.6) circle [radius=0.2cm]; 
 \draw[very thick, violet, rotate=360/7*2] (0,1.6) circle [radius=0.2cm];  
 \draw[very thick, violet, rotate=360/7*3] (0,1.6) circle [radius=0.2cm];  
 \draw[very thick, violet, rotate=360/7*4] (0,1.6) circle [radius=0.2cm]; 
 \draw[very thick, violet, rotate=360/7*5] (0,1.6) circle [radius=0.2cm]; 
 \draw[very thick, violet, rotate=360/7*6] (0,1.6) circle [radius=0.2cm]; 
 \draw[thick, red, rotate=360/7*3, rounded corners=8pt](-1.04,1.04)--(-0.57, 1.14)--(-0.21,1.5);   
 \draw[thick, red, dashed,  rotate=360/7*3, rounded corners=8pt] (-1.04,1.08)--(-0.69, 1.35)--(-0.2,1.54);  
 \draw[thick, red, rotate=360/7*2, rounded corners=8pt](-1.04,1.04)--(-0.57, 1.14)--(-0.21,1.5);  
 \draw[thick, red, dashed,  rotate=360/7*2, rounded corners=8pt](-1.04,1.08)--(-0.69, 1.35)--(-0.2,1.54);    
 \draw[thick, blue, rotate=360/7*6] (0,1.6) circle [radius=0.3cm]; 
 \draw[thick, blue, rotate=0, rounded corners=6pt] (0.02,1.8)--(0.1, 2.15)--(0.02,2.5);  
 \draw[thick, blue, dashed, rotate=0, rounded corners=6pt]   (-0.02,1.8)--(-0.1, 2.15)--(-0.02,2.5);  
  \draw[thick, blue, rotate=360/7, rounded corners=6pt] (0.02,1.8)--(0.1, 2.15)--(0.02,2.5);  
 \draw[thick, blue, dashed, rotate=360/7, rounded corners=6pt]   (-0.02,1.8)--(-0.1, 2.15)--(-0.02,2.5);  
 \draw[thick, dashed, red, rotate=360/7*2, rounded corners=6pt]  (0.02,1.8)--(0.1, 2.15)--(0.02,2.5);  
 \draw[thick, red, rotate=360/7*2, rounded corners=6pt]   (-0.02,1.8)--(-0.1, 2.15)--(-0.02,2.5);  
  \node[scale=0.6, red] at (-2.05,-0.2) {$-$};
  \node[scale=0.6, blue] at (0.8,0.7) {$+$};
  \node[scale=0.6, red] at (0.0,-1.1) {$-$};
  \node[scale=0.6, blue] at (-1.8,1.1) {$+$};
  \node[scale=0.6, blue] at (-0.3,2.1) {$+$};
  \node[scale=0.6, red] at (-0.8,-0.8) {$-$}; 
\end{scope}

\begin{scope} [xshift=0cm, yshift=-11.2cm, rotate=0]
 \draw[very thick, violet] (0,0) circle [radius=2.5cm];
 \draw[very thick, violet] (0,1.6) circle [radius=0.2cm]; 
 \draw[very thick, violet, rotate=360/7] (0,1.6) circle [radius=0.2cm]; 
 \draw[very thick, violet, rotate=360/7*2] (0,1.6) circle [radius=0.2cm];  
 \draw[very thick, violet, rotate=360/7*3] (0,1.6) circle [radius=0.2cm];  
 \draw[very thick, violet, rotate=360/7*4] (0,1.6) circle [radius=0.2cm]; 
 \draw[very thick, violet, rotate=360/7*5] (0,1.6) circle [radius=0.2cm]; 
 \draw[very thick, violet, rotate=360/7*6] (0,1.6) circle [radius=0.2cm]; 
 \draw[thick, blue, rotate=0, rounded corners=8pt](-1.04,1.04)--(-0.57, 1.14)--(-0.21,1.5);   
 \draw[thick, blue, dashed,  rotate=0, rounded corners=8pt] (-1.04,1.08)--(-0.69, 1.35)--(-0.2,1.54);  
 \draw[thick, red, rotate=360/7*2, rounded corners=8pt](-1.04,1.04)--(-0.57, 1.14)--(-0.21,1.5);  
 \draw[thick, red, dashed,  rotate=360/7*2, rounded corners=8pt](-1.04,1.08)--(-0.69, 1.35)--(-0.2,1.54);    
\draw[thick, red, rotate=360/7*4] (0,1.6) circle [radius=0.3cm]; 
 \draw[thick, blue, rotate=360/7, rounded corners=6pt] (0.02,1.8)--(0.1, 2.15)--(0.02,2.5);  
 \draw[thick, blue, dashed, rotate=360/7, rounded corners=6pt]   (-0.02,1.8)--(-0.1, 2.15)--(-0.02,2.5);  
 \draw[thick, dashed, red, rotate=360/7*2, rounded corners=6pt]  (0.02,1.8)--(0.1, 2.15)--(0.02,2.5);  
 \draw[thick, red, rotate=360/7*2, rounded corners=6pt]   (-0.02,1.8)--(-0.1, 2.15)--(-0.02,2.5);  
 \draw[thick, dashed, blue, rotate=360/7*6, rounded corners=6pt]  (0.02,1.8)--(0.1, 2.15)--(0.02,2.5);  
 \draw[thick, blue, rotate=360/7*6, rounded corners=6pt]   (-0.02,1.8)--(-0.1, 2.15)--(-0.02,2.5);  
  \node[scale=0.6, red] at (-2.05,-0.2) {$-$};
  \node[scale=0.6, blue] at (1.8,1.0) {$+$};
  \node[scale=0.6, red] at (0.4,-1.1) {$-$};
  \node[scale=0.6, blue] at (-1.8,1.1) {$+$};
  \node[scale=0.6, blue] at (-0.5,1.0) {$+$};
  \node[scale=0.6, red] at (-0.8,-0.8) {$-$};  
\end{scope}

\begin{scope} [xshift=5.8cm, yshift=-11.2cm, rotate=-360/7]
 \draw[very thick, violet] (0,0) circle [radius=2.5cm];
 \draw[very thick, violet] (0,1.6) circle [radius=0.2cm]; 
 \draw[very thick, violet, rotate=360/7] (0,1.6) circle [radius=0.2cm]; 
 \draw[very thick, violet, rotate=360/7*2] (0,1.6) circle [radius=0.2cm];  
 \draw[very thick, violet, rotate=360/7*3] (0,1.6) circle [radius=0.2cm];  
 \draw[very thick, violet, rotate=360/7*4] (0,1.6) circle [radius=0.2cm]; 
 \draw[very thick, violet, rotate=360/7*5] (0,1.6) circle [radius=0.2cm]; 
 \draw[very thick, violet, rotate=360/7*6] (0,1.6) circle [radius=0.2cm]; 
 \draw[thick, blue, rotate=0, rounded corners=8pt](-1.04,1.04)--(-0.57, 1.14)--(-0.21,1.5);   
 \draw[thick, blue, dashed,  rotate=0, rounded corners=8pt] (-1.04,1.08)--(-0.69, 1.35)--(-0.2,1.54);  
 \draw[thick, red, rotate=360/7*2, rounded corners=8pt](-1.04,1.04)--(-0.57, 1.14)--(-0.21,1.5);  
 \draw[thick, red, dashed,  rotate=360/7*2, rounded corners=8pt](-1.04,1.08)--(-0.69, 1.35)--(-0.2,1.54);    
\draw[thick, red, rotate=360/7*4] (0,1.6) circle [radius=0.3cm]; 
 \draw[thick, blue, rotate=360/7, rounded corners=6pt] (0.02,1.8)--(0.1, 2.15)--(0.02,2.5);  
 \draw[thick, blue, dashed, rotate=360/7, rounded corners=6pt]   (-0.02,1.8)--(-0.1, 2.15)--(-0.02,2.5);  
 \draw[thick, dashed, red, rotate=360/7*2, rounded corners=6pt]  (0.02,1.8)--(0.1, 2.15)--(0.02,2.5);  
 \draw[thick, red, rotate=360/7*2, rounded corners=6pt]   (-0.02,1.8)--(-0.1, 2.15)--(-0.02,2.5);  
 \draw[thick, dashed, blue, rotate=360/7*6, rounded corners=6pt]  (0.02,1.8)--(0.1, 2.15)--(0.02,2.5);  
 \draw[thick, blue, rotate=360/7*6, rounded corners=6pt]   (-0.02,1.8)--(-0.1, 2.15)--(-0.02,2.5);  
  \node[scale=0.6, red] at (-2.05,-0.2) {$-$};
  \node[scale=0.6, blue] at (1.8,1.0) {$+$};
  \node[scale=0.6, red] at (0.4,-1.1) {$-$};
  \node[scale=0.6, blue] at (-1.8,1.1) {$+$};
  \node[scale=0.6, blue] at (-0.5,1.0) {$+$};
  \node[scale=0.6, red] at (-0.8,-0.8) {$-$};  
\end{scope}

\begin{scope} [xshift=11.6cm, yshift=-11.2cm, rotate=-360/7]
  \draw[very thick, violet] (0,0) circle [radius=2.5cm];
 \draw[very thick, violet] (0,1.6) circle [radius=0.2cm]; 
 \draw[very thick, violet, rotate=360/7] (0,1.6) circle [radius=0.2cm]; 
 \draw[very thick, violet, rotate=360/7*2] (0,1.6) circle [radius=0.2cm];  
 \draw[very thick, violet, rotate=360/7*3] (0,1.6) circle [radius=0.2cm];  
 \draw[very thick, violet, rotate=360/7*4] (0,1.6) circle [radius=0.2cm]; 
 \draw[very thick, violet, rotate=360/7*5] (0,1.6) circle [radius=0.2cm]; 
 \draw[very thick, violet, rotate=360/7*6] (0,1.6) circle [radius=0.2cm]; 
 \draw[thick, blue, rotate=0, rounded corners=8pt](-1.04,1.04)--(-0.57, 1.14)--(-0.21,1.5);   
 \draw[thick, blue, dashed,  rotate=0, rounded corners=8pt] (-1.04,1.08)--(-0.69, 1.35)--(-0.2,1.54);  
 \draw[thick, red, rotate=360/7*2, rounded corners=8pt](-1.04,1.04)--(-0.57, 1.14)--(-0.21,1.5);  
 \draw[thick, red, dashed,  rotate=360/7*2, rounded corners=8pt](-1.04,1.08)--(-0.69, 1.35)--(-0.2,1.54);   
  \draw[thick, red, rotate=360/7*3, rounded corners=8pt](-1.04,1.04)--(-0.57, 1.14)--(-0.21,1.5);  
 \draw[thick, red, dashed,  rotate=360/7*3, rounded corners=8pt](-1.04,1.08)--(-0.69, 1.35)--(-0.2,1.54);   
 
 \draw[thick, blue, rotate=360/7, rounded corners=6pt] (0.02,1.8)--(0.1, 2.15)--(0.02,2.5);  
 \draw[thick, blue, dashed, rotate=360/7, rounded corners=6pt]   (-0.02,1.8)--(-0.1, 2.15)--(-0.02,2.5);  
 \draw[thick, dashed, red, rotate=360/7*2, rounded corners=6pt]  (0.02,1.8)--(0.1, 2.15)--(0.02,2.5);  
 \draw[thick, red, rotate=360/7*2, rounded corners=6pt]   (-0.02,1.8)--(-0.1, 2.15)--(-0.02,2.5);  
 \draw[thick, dashed, blue, rotate=360/7*6, rounded corners=6pt]  (0.02,1.8)--(0.1, 2.15)--(0.02,2.5);  
 \draw[thick, blue, rotate=360/7*6, rounded corners=6pt]   (-0.02,1.8)--(-0.1, 2.15)--(-0.02,2.5);  
  \node[scale=0.6, red] at (-2.05,-0.2) {$-$};
  \node[scale=0.6, blue] at (1.8,1.0) {$+$};
  \node[scale=0.6, red] at (0.4,-1.1) {$-$};
  \node[scale=0.6, blue] at (-1.8,1.1) {$+$};
  \node[scale=0.6, blue] at (-0.5,1.0) {$+$};
  \node[scale=0.6, red] at (-0.8,-0.8) {$-$};  
  \end{scope}
  
	 \node[scale=0.8] at (-1.5,2.5) {$F_1$};
 	\node[scale=0.8] at (4.3,2.5) {$F_2$};
 	\node[scale=0.8] at (10.1,2.5) {$F_3$};
 	\node[scale=0.8] at (7.2,-3.1) {$F_4$};
 	\node[scale=0.8] at (1.4,-3.1) {$F_5$};
 	\node[scale=0.8] at (-1.5,-8.7) {$F_6$};
 	\node[scale=0.8] at (4.3,-8.7) {$F_7$};
 	\node[scale=0.8] at (10.1,-8.7) {$F_8$};
 	\draw[ ->, rounded corners=5pt] (2.4,1.5)--(2.9,1.6)-- (3.4,1.5);
 	\node[scale=0.8] at (2.9,2) {$R$};
 	\draw[ ->, rounded corners=5pt] (2.4+5.6,1.5)--(2.9+5.6,1.6)-- (3.4+5.6,1.5);
 	\node[scale=0.8] at (2.9+5.6,2) {$F_2F_1$};
 	\draw[ ->, rounded corners=5pt] (11.1,-2.7)--(11, -3.3)--(10.7,-3.7);
 	\node[scale=0.8] at (11.6,-3.2) {$R^{-1}$};
 	\draw[ ->, rounded corners=5pt] (3.3+3,1.5-5.8)--(2.8+3,1.6-5.8)-- (2.3+3,1.5-5.8);
 	\node[scale=0.8] at (5.8,2-5.8) {$F_4F_3$};
 	\draw[ ->, rounded corners=5pt] (2.4,-9.8)--(2.9,-9.7)-- (3.4,-9.8);
 	\node[scale=0.8] at (2.83,-9.3) {$R$};
 	\draw[ ->, rounded corners=5pt] (8.2,-9.8)--(8.7,-9.7)-- (9.2,-9.8);
 	\node[scale=0.8] at (8.7,-9.4) {$F_7F_6$};
\end{tikzpicture}
 	\caption{Proof of Lemma~\ref{lemma:2} for $g=7$.}
\end{figure}

It follows from Theorem~\ref{thm:thmKorkmaz} that
$H=\mod(\Sigma_g)$, completing the proof of the lemma.
 \end{proof}
 
\bigskip

\section{Main Result}

\begin{lemma} \label{lemma:order}
If $R$ is an element of order k in a group $G$ and if $x$ and $y$ are 
elements in $G$ satisfying $RxR^{-1}=y$, then the order of $Rxy^{-1}$ is also k.
In particular, if $\rho$ is an involution element in a group $G$ and if $x$ and $y$ are 
elements in $G$ satisfying $\rho x \rho =y$, then the element $\rho xy^{-1}$ is also an involution.
 \end{lemma}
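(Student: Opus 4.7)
The plan is to reduce both statements to the single observation that conjugate elements in a group share the same order. Starting from the hypothesis $RxR^{-1}=y$, I would multiply on the right by $R$ to rewrite this as $Rx = yR$, and therefore
\[
Rxy^{-1} \;=\; yRR^{-1}\cdot xy^{-1} \cdot \text{(rearranged)} \;=\; yRy^{-1}.
\]
More directly, from $Rx = yR$ one immediately gets $Rxy^{-1} = yRy^{-1}$. Since $yRy^{-1}$ is a conjugate of $R$, it has the same order as $R$, namely $k$. That dispatches the first claim.

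For the second claim I would specialize to $R=\rho$ with $\rho^2 = 1$, so $\rho^{-1}=\rho$, and the relation $\rho x \rho = y$ becomes $\rho x \rho^{-1}=y$. Applying the first claim with $k=2$, the element $\rho x y^{-1}$ is conjugate to $\rho$ and therefore also has order $2$, i.e.\ is an involution.

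There is no real obstacle here; the proof is a one-line conjugation identity together with the remark that conjugation preserves order. The only mild subtlety worth mentioning is that conjugation preserves the order \emph{exactly}, so an order-$2$ element cannot degenerate to the identity under conjugation, which is what guarantees that $\rho x y^{-1}$ is genuinely an involution and not the trivial element.
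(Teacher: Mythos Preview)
Your proof is correct and follows the same approach as the paper: both arguments rest on the identity $Rxy^{-1}=yRy^{-1}$ (obtained from $Rx=yR$) and the fact that conjugate elements have the same order. The paper simply spells out the order computation explicitly rather than invoking ``conjugation preserves order'' as a known fact, but the content is identical.
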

\begin{proof}
$(Rxy^{-1})^{k}=(yRy^{-1})^{k}=yR^ky^{-1}=1$.\\
On the other hand, if $(Rxy^{-1})^{l}=1$ then $(Rxy^{-1})^{l}=(yRy^{-1})^{l}=yR^ly^{-1}=1$ i.e. $R^l=1$ and hence $k \mid l$.
 \end{proof}
 
Now, we can prove Theorem~\ref{thm:1}.

\begin{proof}
Let $ \rho_3 = R^2 \rho_1R^{-2}$ and $ \rho_4 = R^2 \rho_2R^{-2}$.

Then, by Lemma~\ref{lemma:order} we have 
if $g \geq 7$, then $\rho_3A_1C_1B_3B_5^{-1}C_6^{-1}A_7^{-1}$ is an involution since $\rho_3A_1C_1B_3\rho_3=A_7C_6B_5$ and 
if $g \geq 6$, then $\rho_4A_1C_1B_3B_4^{-1}C_5^{-1}A_6^{-1}$ is an involution since $\rho_4A_1C_1B_3\rho_4=A_6C_5B_4$.

The mapping class group $\mod(\Sigma_6)$ of a closed connected orientable surface of genus 6 is generated by three involutions 
$\rho_1$, $\rho_2$ and \\
$\rho_4A_1C_1B_3B_4^{-1}C_5^{-1}A_6^{-1}$ by Lemma~\ref{lemma:1}.

If $g \geq 7$, then the mapping class group $\mod(\Sigma_g)$ is generated by 
three involutions $\rho_1$, $\rho_2$ and $\rho_3A_1C_1B_3B_5^{-1}C_6^{-1}A_7^{-1}$ by Lemma~\ref{lemma:2}. 
\end{proof}

\end{document}